\theoremstyle{plain}
\newtheorem{theorem}{Theorem}
\newtheorem{proposition}{Proposition}
\newtheorem*{main theorem}{Theorem}
\newtheorem{question}[theorem]{Question}
\newtheorem{corollary}[theorem]{Corollary}
\theoremstyle{definition}
\newtheorem{remark}{Remark}
\newtheorem{step}{Step}
\newtheorem*{properties}{Properties of $(X,\omega)$}
\numberwithin{table}{section} \numberwithin{figure}{section}
\numberwithin{equation}{section}
\DeclareMathOperator{\im}{im}
\DeclareMathOperator{\Hom}{Hom} \DeclareMathOperator{\Crit}{Crit}
\newcommand{\field}[1]{\mathbb{#1}}
\newcommand{\C}{\field{C}}
\newcommand{\R}{\field{R}}
\newcommand{\fiber}{\mathrm{f}}
\begin{document}

\title[An example of circle actions on symplectic Calabi-Yau manifolds]{An example of circle actions on symplectic Calabi-Yau manifolds with non-empty fixed points.}

\author[Y. Cho]{Yunhyung Cho}
\address{School of Mathematics, Korea Institute for Advanced Study,
87 Hoegiro, Dongdaemun-gu, Seoul, 130-722, Republic of Korea}
\email{yhcho@kias.re.kr}

\author[M. K. Kim]{Min Kyu Kim*}
\address{Department of Mathematics Education,
Gyeongin National University of Education, 45 Gyodae-Gil,
Gyeyang-gu, Incheon, 407-753, Republic of Korea}
\email{mkkim@kias.re.kr}

\thanks{* the corresponding author supported by GINUE research fund}

%\subjclass[2000]{Primary 57S25, 55P91 ; Secondary 20C99}
%
%\keywords{equivariant vector bundle, equivariant homotopy, representation}

\begin{abstract}
Let $(X,\sigma,J)$ be a compact K\"{a}hler Calabi-Yau manifold
equipped with a symplectic circle action. By
Frankel's theorem \cite{F}, the action on $X$ is non-Hamiltonian and $X$ does not have any
fixed point. In this paper, we will show that a symplectic circle
action on a compact non-K\"{a}hler symplectic Calabi-Yau manifold
may have a fixed point. More precisely, we will show that the
symplectic $S^1$-manifold constructed by D. McDuff \cite{McD} has
the vanishing first Chern class. This manifold has the Betti numbers $b_1 = 3$, $b_2 = 8$, and $b_3 = 12$. In particular, it does not admit any K\"{a}hler structure.
\end{abstract}

\maketitle

\section{Introduction}\label{1}
Let $(X,\omega)$ be a symplectic manifold and let $G$ be a Lie group
acting on $X$. We say that the $G$-action is \textit{symplectic} if
$g^* \omega = \omega$ for every $g \in G$. Equivalently, the action
is symplectic if and only if $i_{\xi} \omega$ is a closed 1-form for
every Lie algebra element $\xi \in \mathfrak{g}$, where $i_{\xi}
\omega = \omega(\underline{\xi},\cdot)$ is an interior product with
the fundamental vector field $\underline{\xi}$ of $\xi$. When
$i_{\xi} \omega$ is exact for every $\xi \in \mathfrak{g}$, then we
say that the $G$-action is \textit{Hamiltonian}. If the action is
Hamitonian, there exists a map $\mu : X \rightarrow \mathfrak{g}^*$
defined by $$ \mu(p)(\xi) := \mu_{\xi}(p), \mathrm{\forall p \in
X}, \mathrm{\forall \xi \in \mathfrak{g}},$$ where $\mu_{\xi} : X \rightarrow \R$ is a $C^\infty$-function
on $X$ such that $d\mu_{\xi} = i_{\xi}\omega$. We call $\mu$ a
\textit{moment map} for the $G$-action. Therefore, it is natural to
ask the followings.

\begin{question}\label{Q}
Let $(X,\omega)$ be a compact symplectic manifold. Then what
conditions on $(X,\omega)$ make the symplectic action to be
Hamiltonian? (non-Hamiltonian, respectively)
\end{question}
The following results are related to the question \ref{Q}.
\begin{theorem}\label{result}\cite{CKS, F, LO, McD, O}
Let $(X,\omega)$ be a compact symplectic manifold.
\begin{enumerate}
 \item If $X$ is simply-connected, then any symplectic action is
Hamiltonian.
 \item If $(X,\omega,J)$ is a K\"{a}hler manifold and if a given
symplectic circle action preserves $J$, then the action is
Hamiltonian if and only if the fixed point set $X^{S^1}$ is
non-empty. (T. ~Frankel ~1959 ~\cite{F}.)
 \item If $\dim(X) =4$, then any symplectic circle action
is Hamiltonian if and only if the fixed point set $X^{S^1}$ is
non-empty. (D. McDuff 1988 \cite{McD}.)
 \item If $(X,\omega)$ satisfies the weak Lefschetz property,
then any symplectic circle action is Hamiltonian if and only if the
fixed point set $X^{S^1}$ is non-empty. (K. Ono 1988 \cite{O}.)
 \item If $(X,\omega)$ is a monotone symplectic manifold,
then any symplectic circle action is Hamiltonian. (G. Lupton, J.
Oprea 1995 \cite{LO}.)
 \item If $(X,\omega)$ is symplectic Calabi-Yau,
i.e. $c_1(X)=0$ in $H^2(X;\R)$, then any symplectic circle action is
non-Hamiltonian.(Y. Cho, M. K. Kim, D. Y. Suh 2012 \cite{CKS}.)
\end{enumerate}
\end{theorem}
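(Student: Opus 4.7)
The statement is a compilation of six independent results, and my plan is to attack each one with its natural technique; I first isolate the common setup, then specialize. The unifying observation is that a symplectic $G$-action produces closed 1-forms $i_\xi\omega$ for $\xi\in\mathfrak{g}$, so the action is Hamiltonian if and only if every class $[i_\xi\omega]\in H^1(X;\R)$ vanishes. This disposes of (1) immediately: simple-connectedness forces $H^1(X;\R)=0$, a moment map exists, and its critical set on compact $X$ provides fixed points.

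For (2), (3), and (4), the strategy is the same: construct a global primitive of $i_\xi\omega$ from the datum of a non-empty fixed set. In (2) I would follow Frankel: on a compact K\"ahler manifold with $J$-preserving action, the norm-squared $|\underline{\xi}|_g^2$ is a Morse--Bott function whose critical set is $X^{S^1}$, and the K\"ahler identities assemble a Hamiltonian out of the local Morse data. In (3) I would follow McDuff, examining isolated fixed points with their weights and using surface-theoretic methods special to dimension four to patch local Hamiltonians globally. In (4), weak Lefschetz lets me test $[i_\xi\omega]$ via the cup product with $[\omega]^{n-1}$; the existence of fixed points together with Ono's globalization of the test form forces this top-degree pairing, and hence the class itself, to vanish.

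For (5) I would appeal to Lupton--Oprea's rational-homotopy machinery: monotonicity (via $c_1=\lambda[\omega]$ with $\lambda>0$) imposes strong constraints on the minimal model of $X$ that exclude non-Hamiltonian circle actions. For (6), which I expect to be the hardest, one assumes the action is Hamiltonian and hence has non-empty $X^{S^1}$, then combines the equivariant refinement $c_1^{S^1}(X)$ of $c_1(X)=0$ with Atiyah--Bott--Berline--Vergne localization. Localization of a carefully chosen equivariantly closed form built from $c_1^{S^1}$ and $\omega$ produces weight-sum identities at the fixed components; these are incompatible with the global vanishing of $c_1$ together with the positivity of $\omega$, giving the sought contradiction.

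The main obstacle is (6). Items (1)--(5) reduce, once the relevant hypothesis is in place, to classical cohomological or Morse-theoretic tools and established globalization arguments. By contrast, (6) demands a careful equivariant calculation: one must identify the correct equivariantly closed form whose localization sees the tension between $c_1=0$ and non-empty $X^{S^1}$, and the bookkeeping of weights at isolated and higher-dimensional fixed components is where the real work lies.
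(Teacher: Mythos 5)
First, a point of comparison: the paper never proves this theorem. It is a survey statement, and the paper's entire justification is the list of citations --- \cite{F} for (2), \cite{McD} for (3), \cite{O} for (4), \cite{LO} for (5), and the authors' own paper in preparation \cite{CKS} for (6); only the elementary item (1) is the kind of thing one would reprove on the spot. So your proposal cannot be measured against an in-paper argument, only against the cited sources, and judged as a proof it is a roadmap rather than an argument: for (2)--(6) you indicate where a proof should come from but do not supply the decisive step in any of them.

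Judged on its own terms, the sketch also contains concrete inaccuracies. Your reduction to exactness of the classes $[i_{\xi}\omega]$ and your item (1) are fine. For (2), Frankel's proof is Hodge-theoretic, not Morse-theoretic: the action is isometric, so the harmonic representative $h$ of $i_{\xi}\omega$ is invariant; then $i_{\xi}h$ and $i_{\xi}(h\circ J)$ are constants which vanish at a fixed point, and the K\"ahler fact that $J$ preserves harmonic $1$-forms yields $h=0$. The function $|\underline{\xi}|^{2}$ you propose is not in general a Morse--Bott function with critical set $X^{S^1}$ (for the standard rotation of the round $2$-sphere it is also critical along the equator), so that route fails as stated. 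For (4), the class $[i_{\xi}\omega]\cup[\omega]^{n-1}$ lives in degree $2n-1$, not in top degree, and the whole content of Ono's theorem is precisely the step you label ``globalization of the test form'', namely that non-empty $X^{S^1}$ forces this class to vanish; asserting it is not proving it. For (5), Lupton--Oprea argue through the Gottlieb group of the orbit map (as the title of \cite{LO} indicates) rather than through minimal-model constraints on $X$. For (6) --- the result this paper actually leans on --- you only posit that localization of ``a carefully chosen equivariantly closed form'' contradicts $c_1=0$; identifying that class and extracting the weight identities at the fixed components is the entire theorem and is exactly what \cite{CKS} is for, so nothing is established here. In short: (1) is complete, (2) as sketched would fail, and (3)--(6) are citations-in-disguise rather than proofs --- which is legitimate only if presented as such, and that is in fact precisely how the paper treats this theorem.
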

In particular, if we combine the results (2) and (6) of Theorem \ref{result}, then we have
the following corollary.

\begin{corollary}\label{CY}
Let $(X,\omega,J)$ be a compact K\"{a}hler Calabi-Yau manifold.
Assume that there is a symplectic circle action preserving $J$.
Then $X$ has no fixed point.
In particular, the automorphism group of any simply connected K\"{a}hler Calabi-Yau manifold is discrete.
\end{corollary}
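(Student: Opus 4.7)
The first assertion follows by directly combining items (2) and (6) of Theorem~\ref{result}. Given a $J$-preserving symplectic $S^1$-action on the K\"{a}hler Calabi-Yau manifold $(X,\omega,J)$, the plan is to first apply item (6): since $c_1(X) = 0$, the action is non-Hamiltonian. Then the equivalence in item (2), which applies because the action preserves $J$ and $X$ is K\"{a}hler, immediately yields in contrapositive form that $X^{S^1} = \emptyset$. This handles the main statement without further work.

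For the ``in particular'' clause I would argue by contradiction. Assume $X$ is simply connected K\"{a}hler Calabi-Yau and that $\mathrm{Aut}(X)$ has positive dimension. The strategy is to produce a non-trivial symplectic circle action preserving $J$, and then derive a contradiction from items (1) and (6). Since $X$ is compact K\"{a}hler, $\mathrm{Aut}_0(X)$ is a positive-dimensional complex Lie group by Bochner-Montgomery. To extract a compact $S^1 \subset \mathrm{Aut}_0(X)$, I would invoke the fact that on a compact Calabi-Yau manifold every holomorphic vector field is parallel with respect to the Ricci-flat K\"{a}hler metric supplied by Yau's theorem; consequently its real flow consists of isometries, whose closure in the (compact) isometry group is a compact abelian subgroup of positive dimension, and hence contains a circle. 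Averaging $\omega$ over this circle produces an $S^1$-invariant K\"{a}hler form, so the resulting non-trivial $S^1$-action on $X$ is symplectic and preserves $J$.

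The contradiction is now immediate: by item (1), any symplectic action on a simply connected symplectic manifold is Hamiltonian, whereas by item (6) any symplectic $S^1$-action on a Calabi-Yau manifold is non-Hamiltonian. Thus $\mathrm{Aut}_0(X) = \{e\}$, i.e., $\mathrm{Aut}(X)$ is discrete. The step I expect to be most delicate is the extraction of a compact circle subgroup from $\mathrm{Aut}_0(X)$; the Bochner/Yau route above is the cleanest path I see, and has the side benefit that one could, if desired, bypass the corollary entirely by noting that parallel holomorphic vector fields correspond to holomorphic $1$-forms, which vanish on a simply connected K\"{a}hler manifold.
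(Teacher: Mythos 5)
Your proof of the main assertion is exactly the paper's: the paper derives Corollary~\ref{CY} simply by combining items (2) and (6) of Theorem~\ref{result}, with no further argument, and your contrapositive reading of (2) after applying (6) is the same one-line deduction. Where you genuinely diverge is the ``in particular'' clause: the paper offers no proof of it at all, treating it as immediate, whereas you supply the missing argument, and you correctly identify the one point that actually needs care --- extracting a compact circle from a positive-dimensional $\mathrm{Aut}_0(X)$, which is not automatic since a complex Lie group need not contain a circle subgroup. Your route (Yau's Ricci-flat metric, the Bochner-type theorem that holomorphic fields are then parallel, hence Killing, so the flow closes up to a torus inside the compact isometry group, then averaging $\omega$ to get an invariant K\"ahler form) is sound; note that items (1) and (6) are applied to the averaged form, which is legitimate since (1) only uses $\pi_1(X)=0$ and (6) only uses the topological condition $c_1(X)=0$, and the resulting action is nontrivial so the implicit nontriviality in (6) is respected. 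Alternatively you could contradict the main statement directly: by (1) the action is Hamiltonian, so the moment map has critical points, i.e.\ fixed points, contradicting $X^{S^1}=\emptyset$. Your closing observation --- that parallel holomorphic fields dualize to harmonic $1$-forms, which vanish when $b_1=0$, so $H^0(X,TX)=0$ and $\mathrm{Aut}(X)$ is discrete outright --- is the classical shortcut and, as you say, bypasses the symplectic machinery entirely; it is a cleaner proof of the last clause than anything implicit in the paper.
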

The main purpose of this paper is to announce that Corollary
\ref{CY} does not hold on symplectic Calabi-Yau manifolds in
general. Here, we state our main theorem.

\begin{theorem} \label{theorem: main}
There exists a compact symplectic Calabi-Yau manifold $(X,\omega)$
equipped with a symplectic circle action such that the fixed point
set $X^{S^1}$ is non-empty.
\end{theorem}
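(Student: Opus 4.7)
The plan is to take the explicit symplectic 6-manifold $(X,\omega,S^1)$ constructed by D.\ McDuff in \cite{McD} --- which is already known to carry a symplectic circle action with non-empty fixed point set --- and to verify by direct computation that its first Chern class vanishes in $H^2(X;\R)$. Once $c_1(X)=0$ is established, the manifold $X$ is a symplectic Calabi-Yau equipped with a symplectic circle action having fixed points, which proves the theorem. (The non-K\"ahlerness of $X$ asserted in the abstract is not formally part of the statement, but would follow automatically from Corollary \ref{CY}, and can also be double-checked from the Betti numbers $b_1(X)=3$, $b_2(X)=8$, $b_3(X)=12$ via the oddness of $b_1$.)

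The first step is to unpack McDuff's construction and extract a usable description of the tangent bundle $TX$. The fundamental building block is the Kodaira-Thurston nilmanifold $M^4$, a compact symplectic $4$-manifold whose tangent bundle is trivialized by left-invariant framings (so $c_1(M)=0$) and whose first Betti number is odd, $b_1(M)=3$. The manifold $X$ is built from $M$ by a symplectic operation --- for instance a product with a $2$-sphere followed by a symplectic blow-up along an $S^1$-invariant symplectic submanifold --- chosen so that the inherited circle action is symplectic and retains fixed points on $X$.

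The second step is the Chern class computation. One combines the Whitney sum formula on the unmodified part with the symplectic blow-up formula
\[
c_1(\widetilde Y_Z) \;=\; \pi^* c_1(Y) - (k-1)\,[E]
\]
for a blow-up $\pi \colon \widetilde Y_Z \to Y$ along a codimension-$2k$ symplectic submanifold $Z$ with exceptional divisor $E$. Because $c_1(M)=0$, the nilmanifold factor contributes nothing, and the only nontrivial contributions to $c_1(X)$ come from the auxiliary $S^2$-factor (or the ambient Hamiltonian model) together with the class $[E]$ of each exceptional divisor; one then must verify that these cancel in $H^2(X;\R)$. The Betti numbers in the abstract can be read off in parallel from the Leray-Hirsch type decomposition
\[
H^*(X;\R) \;\cong\; \pi^* H^*(Y;\R) \,\oplus\, \bigoplus_{i=1}^{k-1} H^{*-2i}(Z;\R).
\]

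The main obstacle is the cancellation in the second step: in integer cohomology $c_1(X)$ will in general be nonzero (possibly torsion), so the vanishing is genuinely a real-coefficient phenomenon. Carrying it out requires identifying the Chern class of the normal bundle of each blow-up center via the adjunction formula, relating it to pulled-back classes on the ambient product, and then checking that the resulting expression is zero in real de Rham cohomology rather than merely modulo torsion. Once this cohomology identity is verified, Theorem \ref{theorem: main} follows at once from McDuff's description of the circle action and its fixed set.
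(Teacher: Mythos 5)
There is a genuine gap, and it starts at the very first step: your description of McDuff's example is not the actual construction. The 6-dimensional manifold $W$ of \cite{McD} with a fixed-point-containing non-Hamiltonian circle action is not obtained from the Kodaira--Thurston nilmanifold by multiplying with $S^2$ and blowing up along an invariant submanifold. It is obtained by taking a Hamiltonian $S^1$-manifold $(X,\omega)$ with moment map $\mu:X\to[0,7]$, fibering $S^1$-equivariantly over $B=T^4$ with prescribed Euler classes of the circle bundles $\mu^{-1}(s)\to B$ on the regular levels and with four fixed $2$-tori at the critical levels $1,2,5,6$, and then gluing the boundary $\mu^{-1}(0)$ to $\mu^{-1}(7)$ by the involution $\tau(x^1,x^2,x^3,x^4,x^5)=(x^3,x^4,x^1,x^2,x^5)$. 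Because the manifold is not a blow-up, the blow-up formula $c_1(\widetilde Y_Z)=\pi^*c_1(Y)-(k-1)[E]$ and the Leray--Hirsch decomposition you invoke have nothing to act on; the entire second step of your plan cannot be carried out as stated, and no cancellation of exceptional classes is what makes $c_1$ vanish here.

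What the verification actually requires, and what the paper does, is a hands-on computation of $H_2(W;\R)$ for the glued manifold: Gysin sequences for the circle bundles $\mu^{-1}(s)\to T^4$ at the regular levels, Mayer--Vietoris over the elementary cobordisms between critical levels, and then Mayer--Vietoris across the gluing, yielding the eight explicit generators of Proposition \ref{proposition: H_2} (level-set tori $L_{ij}^0$, a fixed torus $Z_{24}^2$ or $Z_{13}^5$, the gradient-flow tori $T_{1+3}^2$, $T_{2+4}^2$, and the invariant sphere $G_{61}$). The Calabi--Yau property is then checked by pairing $c_1(TW)$ with each generator (Proposition \ref{proposition: calabi-yau}); the only delicate cases are $T_{1+3}^2$ and $T_{2+4}^2$, which cross the gluing region, and there the proof exhibits an explicit splitting of $TW$ restricted to these tori into three complex line bundles and a nonvanishing section of each, compensating the coordinate swap effected by $\tau$. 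None of this is visible from your outline, and in particular your framing of the difficulty (integral torsion versus real vanishing) is not where the actual work lies. Your proposal would need to be rebuilt from the correct model of $W$ before any Chern-class argument can be attempted.
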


In fact, there exists a 6-dimensional compact symplectic manifold
equipped with a symplectic non-Hamiltonian circle action with
non-empty fixed point set, which is constructed by D. McDuff in
\cite{McD}. As far as the authors know, McDuff's example is the only
one well-known example of symplectic non-Hamiltonian $S^1$-manifold
with non-empty fixed point set. From now on, we denote the McDuff's
manifold by $(W,\widetilde{\omega})$. As we will see in Section
\ref{section: McDuff's example}, $(W,\widetilde{\omega})$ can be
obtained by the quotient of some Hamiltonian $S^1$-manifold
$(X,\omega)$ with the moment map $\mu : X \rightarrow [0,7] \subset
\R$ with two boundaries $\mu^{-1}(0)$ and $\mu^{-1}(7),$ where the
quotient map is given by some $S^1$- ~equivariant diffeomorphism
between $\mu^{-1}(0)$ and $\mu^{-1}(7)$. From Section
\ref{section: level set} to Section \ref{section: homology of W}, we
give an explicit computation of $H_*(W;\R)$ by
following steps below.

\begin{step}\label{step1}
According to \cite{McD}, the critical values of the moment map $\mu : X \rightarrow [0,7]$
are $1,2,5$,and $6$. Hence the set of regular values of $\mu$ is
$$[0,1) ~\cup (1,2) ~\cup (2,5) ~\cup (5,6) ~\cup (6,7].$$ Choose
one value for each connected open regular interval. (In this paper,
we will choose $t_1 = 0 \in [0,1)$, $t_2 = 1.5 \in (1,2)$, $t_3 = 3.5 \in
(2,5)$, and $t_4 = 5.5 \in (5,6)$. And then, we compute
$H_1(\mu^{-1}(t);\R)$ and $H_2(\mu^{-1}(t);\R)$ for each regular value $t =
t_1, \cdots, t_4$,  and describe the
generators of $H_1(\mu^{-1}(t);\R)$ and $H_2(\mu^{-1}(t);\R)$ by some submanifolds of $\mu^{-1}(t)$.
Step \ref{step1} will be discussed in Section
\ref{section: level set}, and we use the result of \textbf{Step
\ref{step1}} in the remaining steps.
\end{step}

\begin{step}\label{step2}
Note that the moment map $\mu$ is a Morse-Bott function, so we can express
$X$ as the union of ``\textit{elementary cobordisms}" as follows.
$$ X = X_1 \cup X_2 \cup X_3 \cup X_4, $$
where $X_1 := \mu^{-1}([0, 1.5])$, $ X_2 := \mu^{-1}([1.5, 3.5])$,
$X_3 := \mu^{-1}([3.5, 5.5])$, and $X_4 := \mu^{-1}([5.5, 7]). $ In
Section \ref{section: elementary cobordism}, we will study the
topology of each elementary cobordism $X_i$ to compute $H_1(X_i;\R)$
and $H_2(X_i;\R)$ for $i=1, \cdots, 4$.
\end{step}
\begin{step}\label{step3}
For each $i=1, \cdots, 4$, we will compute $H_1(X_i;\R)$ and
describe the generators of $H_1(X_i;\R)$ in Section \ref{section:
H_1 of elementary cobordism}. In Section \ref{section: H_2 of
elementary cobordism}, we will compute $H_2(X_i;\R)$ and describe
the generators of $H_2(X_i;\R)$ by some 2-dimensional
symplectic submanifolds of $X$ for $i=1, \cdots, 4$. After then, we
compute $H_j(X_1 \cup X_2;\R)$ and $H_j(X_3 \cup X_4;\R)$ for $j=1$
or $2$ in Section \ref{section: homology of [0, 3.5]}. Finally, we
will show the followings in Section \ref{section: homology of W}:
\begin{itemize}
\item $H_1(W;\R) \cong \R^3,$
\item $H_2(W;\R) \cong \R^8,$
\item $\langle c_1(W), A \rangle = 0$ for every generator $A \in H^2(W;\R)$.
\end{itemize}
\end{step}

\begin{remark}
Using the fixed point formula for Hirzebruch genus, the Euler characteristic of $W$ equals to the sum of the Euler characteristic of the fixed components. So one can easily deduce that the Euler characteristic $\chi(W) = 0$ which is the alternating sum of the Betti numbers. Hence we have $H_3(W;\R) = \R^{12}$.
\end{remark}

\begin{remark}
Note that $b_1(W) = 3$. Therefore, $W$ does not admit any K\"{a}hler
structure (since $b_1$ is odd).
\end{remark}

\section{McDuff's example} \label{section: McDuff's example}
In this section, we review the 6-dimensional semifree symplectic
non-Hamiltonian $S^1$-action $(W,\widetilde{\omega})$ constructed by
McDuff \cite{McD}, and then define some submanifolds of $W$ by which
we will describe $H_1 (W; \R)$ and $H_2 (W; \R)$ in the below.
Henceforward, the coefficient of homology and cohomology groups is
$\R,$ and we use integral indices $1 \le i, j \le 4,$ and if they are
used simultaneously, then $i \ne j.$

As we introduced in Section \ref{1}, $(W,\widetilde{\omega})$ can be
obtained by the quotient of some Hamiltonian $S^1$-manifold
$(X,\omega)$ with the moment map $\mu : X \rightarrow [0,7] \subset
\R$ with two boundary components $\mu^{-1}(0)$ and $\mu^{-1}(7)$ such that
$$ W = X / \sim \hspace{0.1cm}, \hspace{0.2cm} x \sim y \Leftrightarrow \{x \in \mu^{-1}(0), \hspace{0.05cm}y \in \mu^{-1}(7),\hspace{0.05cm} \tau(x) = y\}$$
for some $S^1$-equivariant diffeomorphism $\tau : \mu^{-1}(0) \rightarrow \mu^{-1}(7).$ We summarize properties of $(X, \omega)$ as follows:
\begin{properties}\label{property}\cite{McD} $(X,\omega)$ satisfies the followings.
\begin{enumerate}
  \item $X$ has four critical levels at $\lambda=1, 2, 5, 6,$ and the
  critical point set is the union of four two-tori $Z^\lambda$ for
  $\lambda=1, 2, 5, 6$ such that $\mu(Z^\lambda)=\lambda.$

\item Let $B = (S^1)^4$ be a 4-dimensional torus with coordinates $(x^1, x^2, x^3, x^4).$ By \cite[Lemma 5.(i)]{McD}, there exists an $S^1$-invariant
  smooth map $\hat{\pi} : X \rightarrow B$ satisfying the followings.
  \begin{enumerate}
\item For each regular value $s$, the restriction map $\hat{\pi}|_{\mu^{-1}(s)} : \mu^{-1}(s) \rightarrow B$ induces a diffeomorphism $\mu^{-1}(s)/S^1 \cong B.$
\item Let $\xi$ be the fundamental vector field of the $S^1$-action
   and let $J$ be an $S^1$-invariant $\omega$-compatible almost complex structure.
  If two points $x, x^\prime \in X$ are connected by a flow
    $J \xi,$ then $\hat{\pi}(x)=\hat{\pi}(x^\prime).$
    \item Let $L_{ij} \subset B$ a two-torus on which the two coordinates
     other than $x^i$ and $x^j$ are constant. Then
    \begin{equation*}
    \hat{\pi}(Z^\lambda) = \left\{       \begin{array}{ll}
                               L_{13} & \quad \hbox{for } s=1,5, \\
                               L_{24} & \quad \hbox{for } s=2,6.
                             \end{array}
                           \right.
    \end{equation*}
    To avoid confusion, we denote $Z^\lambda$ by $Z_{ij}^\lambda$
    when $\hat{\pi}(Z^\lambda)=L_{ij}.$
  \end{enumerate}
  \item
  Let $\sigma_{ij} = dx^i \wedge  dx^j$ and $\sigma_i = dx^i$ on $B$.
  For any regular value $s,$ the Chern classes of the principal $S^1$-bundle
  $\hat{\pi}|_{\mu^{-1}(s)} : \mu^{-1}(s) \rightarrow B$
  is as follows.
  \begin{equation*}
  c_1 (\hat{\pi}|_{\mu^{-1}(s)}) = \left\{
             \begin{array}{ll}
               0                          & \quad \hbox{for } s \in [0,1),  \\
               -[\sigma_{42}]             & \quad \hbox{for } s \in (1,2),  \\
               -[\sigma_{31}+\sigma_{42}] & \quad \hbox{for } s \in (2,5),  \\
               -[\sigma_{31}]             & \quad \hbox{for } s \in (5,6),  \\
               0                          & \quad \hbox{for } s \in (6,7].
             \end{array}
           \right.
  \end{equation*}

  \item For $s=0, 7,$ we may consider $\mu^{-1}(s)$ as $T^5$ (since $c_1 (\hat{\pi}|_{\mu^{-1}(s)})=0)$ with coordinates
  $x^1, x^2, x^3, x^4, x^5$ so that $\hat{\pi}|_{\mu^{-1}(s)}$
  is expressed as
\begin{equation*}
\hspace{1cm} \hat{\pi}|_{\mu^{-1}(s)} : \mu^{-1}(s) = T^5 \longrightarrow B =
T^4, \quad (x^1, x^2, x^3, x^4, x^5) \mapsto (x^1, x^2, x^3, x^4),
\end{equation*}
Obviously, the circle action on $\mu^{-1}(s)$ is given by $$t \cdot (x^1, x^2, x^3, x^4, x^5) = (x^1, x^2, x^3, x^4, t x^5)$$ for every $t \in S^1$.
\end{enumerate}
\end{properties}
Our manifold $(W,\widetilde{\omega})$ is obtained by gluing $\mu^{-1}(0)$ to $\mu^{-1}(7)$, where the gluing map $\tau : \mu^{-1}(0) \rightarrow \mu^{-1}(7)$ is given by an involution
\begin{equation*}
\tau : \mu^{-1}(0) \rightarrow \mu^{-1}(7), \quad (x^1, x^2, x^3,
x^4, x^5) \mapsto (x^3, x^4, x^1, x^2, x^5).
\end{equation*}
Let $\tilde{\pi} : X \rightarrow W$ be the quotient map induced by the gluing. Then, $W$
carries the generalized moment map
\begin{equation*}
\psi : W \rightarrow S^1, \quad \tilde{\pi}(x) \longmapsto \exp
\Big( \frac {2\pi \imath ~ \mu(x)} 7 \Big), \qquad \forall{x} \in
X.
\end{equation*}

To describe the generators of $H_1 (W)$ and $H_2 (W)$ explicitly, we define
some submanifolds of $W.$ First, we define loops in $W.$ For $s=0$ or $7$, denote by
$L_i^s \subset \mu^{-1}(s) = T^5$ a circle on which coordinates other than $x^i$ are constant  $1 \le i \le 4,$ and
denote by $L_\fiber^s$ a circle in $\mu^{-1}(s) = T^5$ on which
coordinates other than $x^5$ are constant. Here, $\fiber$ is the
initial alphabet of ``fiber".
\begin{remark}
    Sometimes, we will describe the generators of $H_*(W)$ by submanifolds of $X$, i.e. the reader should regard any submanifold $K \subset X$ as an    image $\tilde{\pi}(K) \subset W$. For example, the gluing map $\tau$ identifies $L_1^0$ with $L_3^7$ and identifies $L_2^0$ with $L_4^7$. Hence we should keep in mind that $L_1^0 = L_3^7$ and $L_2^0 = L_4^7$ as generators of $H_1(W).$
\end{remark}
We define one more loop in $W.$ Let $\Crit \psi$ be the set of all critical points of $\psi$, i.e. $\Crit \psi = \cup_{\lambda = 1,2,5,6} Z^{\lambda}.$  Fix two points $y_0,$ $y_1 \in W$ in the
level set $\psi^{-1} (1),$ $\psi^{-1} (\imath),$ respectively. And,
choose two paths $\gamma_1, \gamma_2 : [0,1] \rightarrow W-\Crit \psi$
such that
\begin{equation*}
\begin{array}{ll}
\gamma_1 (0) = \gamma_2(1)= y_0, \quad & \gamma_1 (1) = \gamma_2(0)= y_1,  \\
\psi(\gamma_1 (t)) = \exp(\pi \imath t), \quad & \psi(\gamma_2 (t))
= \exp(\pi \imath (t+1)).
\end{array}
\end{equation*}
Let $\gamma : [0,1] \rightarrow W$ be the loop $\gamma_1.\gamma_2$ so that $\psi|_{\gamma} : \gamma \rightarrow S^1$ is a bijection.
In Section ~\ref{section: homology of W}, we will prove
\begin{proposition} \label{proposition: H_1}
$H_1 (W) = \big\langle L_1^0, L_2^0, \gamma \big\rangle \cong \R^3.$
\end{proposition}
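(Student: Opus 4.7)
The strategy is a Mayer--Vietoris computation that assembles the pieces already constructed in Steps \ref{step1}--\ref{step3}. Let $A$ and $B$ be small open thickenings in $W$ of the images $\tilde{\pi}(\mu^{-1}([0,3.5]))$ and $\tilde{\pi}(\mu^{-1}([3.5,7]))$, respectively. By deformation retraction, $A \simeq X_1 \cup X_2$ and $B \simeq X_3 \cup X_4$, while $A \cap B$ deformation retracts onto the disjoint union of $\mu^{-1}(3.5)$ and the common image in $W$ of the two boundary levels $\mu^{-1}(0) \cong \mu^{-1}(7)$. The $H_*$ of each of these pieces has been described explicitly in Sections \ref{section: level set}--\ref{section: homology of [0, 3.5]}.

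Consider the Mayer--Vietoris sequence
\begin{equation*}
H_1(A \cap B) \xrightarrow{\phi} H_1(A) \oplus H_1(B) \xrightarrow{j} H_1(W) \xrightarrow{\partial} H_0(A \cap B) \to H_0(A) \oplus H_0(B).
\end{equation*}
Over $\R$ this gives $\dim H_1(W) = \dim \mathrm{coker}\,\phi + \dim \ker\bigl(H_0(A \cap B) \to H_0(A) \oplus H_0(B)\bigr)$. Since $A$ and $B$ are each connected while $A \cap B$ has two connected components, the second summand equals $1$; it is realized in $H_1(W)$ by $\gamma$, since by construction $\gamma$ crosses each component of $A \cap B$ transversely in a single point and therefore $\partial [\gamma]$ is nonzero in that kernel.

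It remains to show that $\mathrm{coker}\,\phi$ has rank two and is spanned by the classes of $L_1^0$ and $L_2^0$. Using the explicit bases of $H_1(A \cap B)$, $H_1(A)$, $H_1(B)$ supplied by Step \ref{step3}, together with the gluing identifications $\tau(L_1^0) = L_3^7$, $\tau(L_2^0) = L_4^7$, $\tau(L_\fiber^0) = L_\fiber^7$ coming from $W = X/\sim$, one writes $\phi$ as an explicit matrix and reads off $\mathrm{coker}\,\phi = \R\langle L_1^0, L_2^0 \rangle$. Combined with the contribution from $\gamma$, this yields $H_1(W) = \langle L_1^0, L_2^0, \gamma \rangle \cong \R^3$.

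The main obstacle is the bookkeeping of how $H_1$-classes of the regular level sets $\mu^{-1}(t)$ are transported into the adjacent elementary cobordisms $X_i$ as $t$ crosses each critical value $\lambda \in \{1, 2, 5, 6\}$. This transport is governed by the Chern-class data in Properties (3), which specifies which fiber-direction combinations become null-homologous near each critical two-torus $Z^\lambda_{ij}$. Once those inclusion-induced maps on $H_1$ have been tabulated, identifying $\phi$ and reading off its cokernel is a direct rank computation.
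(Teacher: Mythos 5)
Your proposal is correct and follows essentially the same route as the paper: the same decomposition of $W$ into $\mu^{-1}([0,3.5])$ and $\mu^{-1}([3.5,7])$ with intersection the two level sets $\mu^{-1}(3.5)$ and $\mu^{-1}(0)\cong\mu^{-1}(7)$, the same Mayer--Vietoris sequence, with $\gamma$ accounting for the $H_0$-kernel via $\partial_*[\gamma]=\pm(y_1-y_0)$ and $L_1^0, L_2^0$ spanning $\mathrm{coker}\,\phi$ after imposing the gluing identifications $\tau(L_1^0)=L_3^7$, $\tau(L_2^0)=L_4^7$, $\tau(L_\fiber^0)=L_\fiber^7$. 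The explicit matrix computation of $\phi$ that you defer is exactly the rank-$6$ image / rank-$3$ kernel calculation carried out in Section \ref{section: homology of W} using the level-set and cobordism data from the earlier sections.
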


Second, we define some tori and a sphere in $W.$  For $s=0$ or $7$, denote by
$L_{ij}^s \subset \mu^{-1}(s) = T^5$ a two-torus on which
coordinates other than $x^i$ and $x^j$ are constant, and denote by
$L_{i \fiber}^s$ a two-torus in $\mu^{-1}(s) = T^5$ on which the two coordinates other than $x^i$ and $x^5$ are
constant for $1 \le i,j \le 4.$ Also, let $G_{61} \subset W$ be an $S^1$-invariant sphere which connects $Z_{24}^6$ and $Z_{13}^1$ and
whose image through $\hat{\mu}$ is the counterclockwise arc from
$\exp(\frac{2\pi \cdot 6}{7})$ to $\exp(\frac{2\pi \cdot 1}{7}).$
\begin{remark}
    Note that $G_{61}$ can be chosen as follows. Let $\gamma : [0,1] \rightarrow W$ be a path such that
    \begin{itemize}
        \item $\gamma(0) \in Z_{24}^6$,
        \item $\gamma(1) \in Z_{13}^1$, and
        \item $\langle \mathrm{grad}\gamma, J\xi \rangle > 0$, where $\langle , \rangle$ is a Riemannian metric induced by $\omega$ and $J$.
    \end{itemize}
Then we get a 2-sphere $G_{61} := S^1 \cdot \gamma$.
\end{remark}

Now, we define two more tori. Let $L_{1+3},$ $L_{2+4}$ be loops in $\mu^{-1}(0)$
defined by
\begin{align*}
& \{(\exp(2\pi t \imath), 1, \exp(2\pi t \imath), 1, 1) ~|~ t \in \R
\}, \\
& \{(1, \exp(2\pi t \imath), 1, \exp(2\pi t \imath), 1) ~|~ t \in \R
\},
\end{align*}
respectively. And, let $L_{1+3}^\prime,$ $L_{2+4}^\prime$ be loops
in $\mu^{-1}(7)$ defined by
\begin{align*}
& \{(\exp(2\pi t \imath), 1, \exp(2\pi t \imath), 1, 1) ~|~ t \in \R
\}, \\
& \{(1, \exp(2\pi t \imath), 1, \exp(2\pi t \imath), 1) ~|~ t \in \R
\},
\end{align*}
respectively. By varying constant coordinates of $L_{ij}$ other than
$x_i, x_j,$ we may assume that $\hat{\pi}(L_{1+3}),$
$\hat{\pi}(L_{2+4})$ do not intersect any $L_{ij}$ in $B.$ By
(2)-(b) of ``properties of $(X,\omega)$", this implies that we may
assume that $L_{1+3}$ and $L_{1+3}^\prime$ (also $L_{2+4}$ and
$L_{2+4}^\prime$) are connected by gradient flow of $J\xi.$ And the
trajectory of $L_{1+3}$ along the gradient flow of $J\xi$ swept by
the gradient flow beginning at $L_{1+3}$ and $L_{2+4}$ are
diffeomorphic to $L_{1+3} \times [0, 7]$ and $L_{2+4} \times [0, 7]$
whose images $\tilde{\pi}(L_{1+3} \times [0, 7])$ and
$\tilde{\pi}(L_{2+4} \times [0, 7])$ are called $T_{1+3}^2$ and
$T_{2+4}^2,$ respectively. In Section \ref{section: homology of W},
we will prove the followings.
\begin{proposition} \label{proposition: H_2}
\begin{equation*}
H^2(W) = \langle ~ L_{12}^0, ~ L_{13}^0, ~ L_{14}^0, ~ L_{24}^0, ~
Z_{24}^2 \text{ or } Z_{13}^5, ~ T_{1+3}^2, ~ T_{2+4}^2, ~ G_{61} ~
\rangle \cong \R^8.
\end{equation*}
\end{proposition}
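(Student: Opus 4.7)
The plan is to execute the strategy already announced in Steps 1--3: compute $H_*$ level by level, then on each elementary cobordism, then glue with Mayer--Vietoris. The statement of Proposition 2 identifies eight explicit 2-cycles; I need to prove that they span $H_2(W)$ and are linearly independent, and the same scheme gives Proposition 1 along the way.

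First, I would compute $H_1(\mu^{-1}(t_k))$ and $H_2(\mu^{-1}(t_k))$ for the four regular values $t_1, \dots, t_4$ using the Gysin sequence of the principal $S^1$-bundle $\hat{\pi}|_{\mu^{-1}(t_k)} : \mu^{-1}(t_k) \to B = T^4$ with Chern class given in property (3). For $t_1 = 0$ the bundle is trivial, so $\mu^{-1}(0) = T^5$ and the classes $L_i^0, L_\fiber^0$ and $L_{ij}^0, L_{i\fiber}^0$ provide the usual bases. For the twisted levels $t_2, t_3, t_4$, multiplication by $c_1(\hat{\pi}|_{\mu^{-1}(t_k)})$ kills the 2-classes corresponding to $\sigma_{42}, \sigma_{31}+\sigma_{42}, \sigma_{31}$ respectively, and the surviving 1- and 2-classes are expressed through the loops $L_i^{t_k}, L_\fiber^{t_k}$ and tori $L_{ij}^{t_k}, L_{i\fiber}^{t_k}$ pulled back from $B$.

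Next, I would apply Morse--Bott theory on each elementary cobordism $X_k$. Since the critical set at level $\lambda$ is a two-torus $Z^\lambda$ whose normal bundle has rank 4, $X_k$ deformation retracts onto $\mu^{-1}(t_{k-1})$ together with the disk bundle of the negative normal bundle of $Z^\lambda$; equivalently, passing through $\lambda$ attaches the cells dual to the stable manifold. With the explicit Chern class data in property (3), this makes transparent which 1- and 2-classes on the regular level die, and which new 2-classes (most importantly $Z_{24}^2$ at $\lambda=2$ and $Z_{13}^5$ at $\lambda=5$) are born. The Mayer--Vietoris sequence for $X = (X_1 \cup X_2) \cup (X_3 \cup X_4)$, glued along $\mu^{-1}(t_3)$, yields $H_*(X)$ together with a basis of $H_2(X)$ comprising $L_{12}^0, L_{13}^0, L_{14}^0, L_{24}^0$, one of $Z_{24}^2, Z_{13}^5$ (the other equals its negative modulo boundaries), and the sphere $G_{61}$'s predecessors inside $X$.

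Finally, I would glue $\mu^{-1}(0)$ to $\mu^{-1}(7)$ via $\tau$ to pass from $X$ to $W$, using the Mayer--Vietoris (equivalently, Wang) sequence of the mapping torus structure induced by $\psi : W \to S^1$. Under $\tau_*$, the identifications $L_1^0 \leftrightarrow L_3^7$ and $L_2^0 \leftrightarrow L_4^7$ mean that, in the Wang sequence, the loops $L_{1+3}$ and $L_{2+4}$ become images of cycles that have trivial monodromy, so their gradient-flow traces $T_{1+3}^2$ and $T_{2+4}^2$ close up into genuine 2-cycles in $W$. Simultaneously, the arc from $Z_{24}^6$ to $Z_{13}^1$ crossing $\psi^{-1}(1)$ sweeps out the equivariant sphere $G_{61}$ which becomes a new class not present in $X$. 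Tracking these contributions gives exactly the eight listed generators, and the surviving Wang kernel gives $H_1(W)$ with basis $L_1^0, L_2^0, \gamma$, proving Proposition 1 in passing.

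The main obstacle will be the bookkeeping in the gluing step: I must show both that no extra 2-cycles are created (beyond $T_{1+3}^2, T_{2+4}^2, G_{61}$) and that no relations arise among the eight proposed generators. Linear independence will be verified by pairing with the 1-cycles $L_1^0, L_2^0, L_\fiber^0, \gamma$ through intersection numbers in $W$: for instance $T_{1+3}^2 \cdot L_\fiber^0 = 0$ while $G_{61} \cdot \gamma \ne 0$, and each $L_{ij}^0$ pairs nontrivially with a unique complementary loop. Counting gives $\dim H_2(W) \ge 8$, and the Wang sequence combined with the cobordism computation caps $\dim H_2(W)$ at $8$, matching the $b_2=8$ claim. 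The Euler characteristic cross-check $\chi(W) = 0$ then forces $b_3 = 12$, as already noted in the remark.
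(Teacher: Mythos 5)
Your outline follows the paper's own route almost verbatim: Gysin sequences for the regular levels, Morse--Bott attachment of the disc bundles $D_\epsilon(\nu^\pm)$ for the elementary cobordisms, and Mayer--Vietoris to assemble the pieces. The only structural difference is that you propose to compute $H_*(X)$ first and then glue the two ends by $\tau$, whereas the paper writes $W=\mu^{-1}[0,3.5]\cup\mu^{-1}[3.5,7]$ so that $A\cap A'=\mu^{-1}(0)\cup\mu^{-1}(3.5)$ and does the whole gluing in a single sequence; both decompositions work. (Be careful with the word ``Wang sequence'': $\psi$ is not a fibration, since it has the four critical tori, so only the Mayer--Vietoris form of your last step is available -- which is enough.)

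The genuine gap is your proposed verification of linear independence. In a $6$-manifold the intersection product pairs $H_2$ with $H_4$ (and $H_1$ with $H_5$); an intersection number of a $2$-cycle against a $1$-cycle, such as $G_{61}\cdot\gamma$ or $T_{1+3}^2\cdot L_\fiber^0$, is not defined, so this cannot produce the lower bound $\dim H_2(W)\ge 8$. The independence must come from exactness itself, and this is exactly the part of the argument your plan omits: one has to (i) show that $(i_*,j_*)\colon H_2(\mu^{-1}(0))\oplus H_2(\mu^{-1}(3.5))\to H_2(A)\oplus H_2(A')\cong\R^7\oplus\R^7$ has rank $9$, so that $\im(k_*-l_*)$ has rank $5$ and, using the relations recorded on the cobordisms (in particular $Z_{13}^5+Z_{24}^2=L_{13}^{1.5}+L_{24}^{5.5}$, coming from $(L_{13}-L_{24})^{3.5}$), is spanned by $L_{12}^0, L_{14}^0, L_{13}^0, L_{24}^0$ and one of $Z_{24}^2, Z_{13}^5$; and (ii) compute $\partial_*(T_{1+3}^2)=(L_1^0+L_3^0,\,-L_1^{3.5}-L_3^{3.5})$, $\partial_*(T_{2+4}^2)=(L_2^0+L_4^0,\,-L_2^{3.5}-L_4^{3.5})$, $\partial_*(G_{61})=(L_\fiber^0,0)$, i.e.\ that these three classes surject onto the rank-$3$ kernel of $(i_*,j_*)$ in degree $1$. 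Exactness then yields $H_2(W)\cong\R^{5+3}=\R^8$ with precisely the eight listed generators. A smaller but related slip: your claimed basis of $H_2(X)$ is wrong -- before the gluing one gets $H_2(X)\cong\R^9$ (it still contains $L_{23}$, $L_{34}$ and the classes from both boundary levels), and the two halves of $G_{61}$ are discs, not cycles, in $X$; the classes $G_{61}$, $T_{1+3}^2$, $T_{2+4}^2$ exist only in $W$, exactly as the witnesses of $\im\partial_*$ described above.
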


\begin{proposition} \label{proposition: calabi-yau}
$c_1 ( T W|_Q)=0$ for each generator $Q$ of $H^2(W).$
\end{proposition}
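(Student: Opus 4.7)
The plan is to evaluate the pairing $\langle c_1(TW),[Q]\rangle$ for each of the eight generators $Q$ from Proposition~\ref{proposition: H_2}, treating them in three groups depending on their position relative to $\Crit \psi$.

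Over the regular part $W^*:=W\setminus \Crit\psi$, I would use the $J$-complex splitting $TW|_{W^*}=\mathbb{C}\langle\xi\rangle\oplus H$, where $\xi$ is the (nowhere vanishing) fundamental vector field and $H$ is the horizontal subbundle, which over each regular level set $\mu^{-1}(s)$ is identified with $\hat\pi|_{\mu^{-1}(s)}^* TB$. The first summand is trivialized by $\xi$, and because $TB=T(T^4)$ is trivial as a complex vector bundle, $c_1(H|_{\mu^{-1}(s)})=0$ for every regular $s$. For $L_{12}^0,L_{13}^0,L_{14}^0,L_{24}^0\subset \mu^{-1}(0)$ this gives $\langle c_1,[Q]\rangle=0$ immediately. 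For $T_{1+3}^2$ and $T_{2+4}^2$, the explicit formulas for $L_{1+3}$, $L_{2+4}$ and $\tau$ show that $\tau$ fixes these loops pointwise, so these submanifolds are product mapping tori of the form $L\times S^1$, and $\hat\pi$ sends them to a single loop in $B$; pulling back the trivial complex bundle $TB$ to this loop yields a flat rank-$2$ complex bundle on the cylinder, and the $d\tau$-monodromy at the gluing preserves flatness, so $c_1(H|_{T^2})=0$.

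For the critical torus $Z_{24}^2$ (the argument for $Z_{13}^5$ is identical), I would use the splitting $TW|_Z=TZ\oplus N^+\oplus N^-$, where $N^\pm$ carry the $\pm1$ weights of the $S^1$-action. Since $TZ$ is a trivial complex line bundle, the task reduces to showing $c_1(N^+)+c_1(N^-)=0$. The local Morse-Bott model $\mu=\lambda+|v^+|^2-|v^-|^2$ shows that a tubular neighborhood of $Z^\lambda$ intersected with $\mu^{-1}(\lambda+\epsilon)$ retracts onto the unit circle bundle $S(N^+)$; combined with the sign coming from the $-1$ weight on $N^-$, this yields the identifications $c_1(N^+)=c^+|_{L_{ij}}$ and $c_1(N^-)=-c^-|_{L_{ij}}$, so $c_1(N^+)+c_1(N^-)=(c^+-c^-)|_{L_{ij}}$. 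Using Property~(3), at each of the four critical values $c^+-c^-$ equals $\pm[\sigma_{31}]$ or $\pm[\sigma_{42}]$, and each such form restricts to zero on the corresponding $L_{ij}$ because the complementary coordinates are constant there.

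The sphere $G_{61}$ is the most delicate generator. It is an $S^1$-invariant $2$-sphere with fixed poles $p\in Z_{24}^6$ and $q\in Z_{13}^1$, obtained by rotating a $J\xi$-gradient path that traverses $\psi$ counterclockwise from $\exp(12\pi\imath/7)$ to $\exp(2\pi\imath/7)$. The defining $J\xi$-direction at $p$ is the $\mu$-increasing direction, i.e., the positive-weight subbundle $N^+|_p$ at $Z^6$; at $q$, the path arrives from $\mu<1$, i.e., from the negative-weight subbundle $N^-|_q$ at $Z^1$. Therefore the $S^1$-weights of $TG_{61}$ at $(p,q)$ are $(+1,-1)$, and the standard equivariant line-bundle formula for $S^2$ gives $c_1(TG_{61})=2$. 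The equivariant normal bundle $N_{G_{61}/W}$ splits into a weight-zero line subbundle contributed by $TZ^6|_p$ and $TZ^1|_q$ (with $c_1=0$), and a line subbundle of weights $(-1,+1)$ at $(p,q)$ contributed by $N^-|_p$ and $N^+|_q$ (with $c_1=-2$). Hence $c_1(TW|_{G_{61}})=2+(-2)=0$. The hard part is precisely this final step: one must correctly match the local $S^1$-equivariant normal models at the poles with the global construction of $G_{61}$ and carry out the equivariant Chern class computation without sign errors.
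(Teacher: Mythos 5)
Your overall decomposition of the eight generators is sound, and two of your three blocks are essentially fine: for $Z_{24}^2$ (and $Z_{13}^5$) the vanishing reduces to $c_1(\nu^+)+c_1(\nu^-)=0$, which is exactly the paper's (\ref{equation: chern class 1})--(\ref{equation: chern classes}) (your sign bookkeeping $(c^+-c^-)|_{L_{ij}}$ is in fact the consistent reading of those identities), and for $G_{61}$ your weight count $(+1,-1)$ on the tangent and $(-1,+1)$ plus $(0,0)$ on the normal gives $2+0-2=0$, which agrees with the fixed-point data of the semifree action (modulo supplying the equivariant splitting of the normal bundle over the sphere, which you flag yourself).

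The genuine gap is in the block you dispose of ``immediately'': the claim that over the free part $TW=\C\langle\xi\rangle\oplus H$ with $H\cong\hat\pi^*TB$ ``trivial as a complex vector bundle.'' $TB$ carries no natural complex structure, $d\hat\pi|_H$ is only $\R$-linear, and the almost complex structure $J|_H$ varies along the $\hat\pi$-fibres, so $H$ is not a pullback of a complex bundle without further argument. More importantly, real triviality does not imply $c_1=0$: a complex structure on a trivial real rank-$4$ bundle over a $2$-torus is only constrained to have even $c_1$ (indeed $T^4$ admits almost complex structures with $c_1\neq 0$), so nothing is ``immediate'' for $L_{12}^0,\dots,L_{24}^0$, and the same flaw undercuts your flat-bundle argument for $T_{1+3}^2$ and $T_{2+4}^2$ --- which are precisely the generators the paper considers the nontrivial ones. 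The paper's proof instead trivializes $TX$ over $L_{1+3}\times[0,7]$ as a complex bundle, writes $d\tau$ in these coordinates as the constant swap $(z_1,z_2,z_3)\mapsto(z_2,z_1,z_3)$, splits into the $\pm1$ eigen-line-bundles of the swap, and exhibits explicit nonvanishing sections (the antidiagonal one via $e^{\pi\imath s/7}$, which picks up exactly the sign the gluing requires). Your monodromy idea does become a proof once it is based on that explicit complex trivialization and constant gluing matrix (a flat bundle has vanishing real $c_1$), and the level-set tori can be repaired similarly, e.g.\ by using that compatible complex structures on the symplectic vector bundle $H$ are unique up to homotopy and choosing one pulled back from a linear complex structure on the reduced $T^4$; but as written the justification offered for half of the generators does not hold.
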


\section{Homology groups of a regular level set $\mu^{-1}(s)$}
\label{section: level set}

In this section, we define $L_i^s,$ $L_{i \fiber}^s,$ $L_{ij}^s$ in
$\mu^{-1}(s)$ for $s=1.5,$ $3.5,$ $5.5$ as we have defined for $s=0,
7,$ and we compute the homology groups $H_i(\mu^{-1}) (s)$
for regular values $s=0,$ $1.5,$ $3.5$ for $i=1,2$.

Let $\pi : P \rightarrow B$ be an oriented smooth $S^1$-bundle.
Then, the Gysin sequence is the following long exact sequence of de
Rham cohomology groups:
\begin{equation} \label{sequence: gysin}
\footnotesize \SelectTips{cm}{} \xymatrix{ H^p (B) \ar[r]^-{e
\wedge} & H^{p+2} (B) \ar[r]^-{\pi^*} & H^{p+2} (P) \ar[r]^-{\pi_*}
& H^{p+1} (B) \ar[r]^-{e \wedge}  & H^{p+3} (B) \ar[r] & }
\end{equation}
where $\pi^*$ is the pullback induced by $\pi,$ $e \wedge$ is the
wedge product of a differential form with the Euler class $e$ of the
bundle, and $\pi_*$ is the integration along the fiber of
differential forms. Applying $\Hom_\R(\cdot, \R)$ to this, we obtain
the following exact sequence:
\begin{equation} \label{sequence: homological gysin}
\footnotesize \SelectTips{cm}{} \xymatrix{   H^p (B)  & \ar[l]_-{(e
\wedge)^\prime} H^{p+2} (B) & \ar[l]_-{(\pi^*)^\prime} H^{p+2} (P) &
\ar[l]_-{\pi_*^\prime} H^{p+1} (B)  & \ar[l]_-{(e \wedge)^\prime}
H^{p+3} (B) & \ar[l]}
\end{equation}
where $^\prime$ means transpose. Here, $(\pi^*)^\prime$ is equal to
the homology functor $H_1(\pi).$ Let $\imath : C \rightarrow B$ be a smooth inclusion of a circle $C$. Since $\pi_*$ is natural, we obtain
the following commutative diagram:
\begin{equation} \label{diagram: naturality}
\footnotesize \SelectTips{cm}{}  \xymatrix{ H^{p+2}(P)
\ar[r]^-{\pi_*} \ar[d]_-{{\bar{\imath}}^*} & H^{p+1}(B)
\ar[d]^-{\imath^*}     \\
H^{p+2}(P|_C) \ar[r]^-{(\imath^* \pi)_*} & H^{p+1}(C)       }
\end{equation}
where $\imath^* \pi : \imath^* P = P|_C \longrightarrow A$ is the pullback bundle
\begin{equation*} \label{diagram: pullback bundle}
\footnotesize \SelectTips{cm}{} \xymatrix{\imath^* P
\ar[r]^-{\bar{\imath}}
\ar[d]_-{\imath^* \pi} & P \ar[d]^-{\pi} \\
C \ar[r]^-{\imath} & B}
\end{equation*}
Applying $\Hom_\R(\cdot, \R)$ to (\ref{diagram: naturality}) for the case when $p=0,$ we obtain
\begin{equation} \label{diagram: naturality for circle}
\footnotesize \SelectTips{cm}{}  \xymatrix{  H_2 (P)  &
\ar[l]_-{\pi_*^\prime} H_1 (B)       \\
H_2 (P|_C) \ar[u]^-{({\bar{\imath}}^*)^\prime} & \ar[l]_-{(\imath
^*\pi)_*^\prime} H_1 (C) \ar[u]_-{(\imath^*)^\prime} }
\end{equation}

We will apply the Gysin sequence to the principal $S^1$-bundle
$\hat{\pi}|_{\mu^{-1}(s)} : \mu^{-1}(s) \rightarrow B$ for regular
values $s=0, 1.5, 3.5, 5.5, 7.$ For simplicity, denote $\mu^{-1}(s)$ and
$\hat{\pi}|_{\mu^{-1}(s)}$ by $P$ and $\pi$, respectively. To express homology groups of level sets by their
generators, we define some submanifolds of level sets. Pick a circle
$L_i^s$ in $P$ such that
\begin{equation*}
\pi (L_i^s) = L_i
\end{equation*}
for $s=1.5, ~ 3.5, ~ 5.5.$ This is always possible because the
restricted bundle $P|_{L_i}$ is a trivial bundle. Denote by
$L_\fiber^s$ for $s = 1.5, 3.5, 5.5$ a fiber of $\mu^{-1}(s).$ If
exists, pick a torus $L_{ij}^s$ in $P$ such that
\begin{equation*}
\pi (L_{ij}^s) = L_{ij}
\end{equation*}
for $s=1.5, ~ 3.5, ~ 5.5.$ This is possible only when $P|_{L_{ij}}$
is trivial. Denote by $L_{i \fiber}^s$ the bundle $P|_{L_i} = \pi^{-1}(L_i)$ for $s
= 1.5, 3.5, 5.5.$

\subsection{Homology of $\mu^{-1}(s)$ for $s=0$}

In this case, $c_1 (P)=0,$ and $\pi$ is
\begin{equation*}
(x^1, x^2, x^3, x^4, x^5) \mapsto (x^1, x^2, x^3, x^4).
\end{equation*}
It is easy to see that
\begin{equation} \label{homology: n=1,2 s=0}
\begin{array}{lll}
H_1 \big( \mu^{-1}(0) \big)    & = ~ \langle ~ L_i^0 ~|~ 1 \le i \le
4 ~ \rangle + \langle ~ L_\fiber^0 ~ \rangle
& \cong ~ \R^5,  \\
H_2 \big( \mu^{-1}(0) \big)    & = ~ \langle ~ L_{ij}^0 ~|~ 1 \le
i,j \le 4 ~ \rangle + \langle ~ L_{i \fiber}^0 ~|~ 1 \le i \le 4 ~
\rangle    & \cong ~ \R^{10}.
\end{array}
\end{equation}

\subsection{Homology of $\mu^{-1}(s)$ for $s=1.5$}

In this case, $c_1 (P)=-[\sigma_{42}].$ So, $L_{24}^{1.5}$ does not
exist, but other $L_{ij}^{1.5}$'s exist. First, we calculate $H_1
(P).$ Substituting $p=-1$ into (\ref{sequence: gysin}) and
(\ref{sequence: homological gysin}), we obtain the
following:{\footnotesize
\begin{align} \label{sequence: gysin s=1.5 p=-1}
\SelectTips{cm}{} \xymatrix{ H^{-1} (B) = 0 \ar[r]^-{e \wedge} & H^1
(B) \cong \R^4 \ar[r]^-{\pi^*} & H^1 (P) \ar[r]^-{\pi_*} & H^0 (B)
\cong \R \ar[r]^-{e \wedge}  & H^2 (B) \cong \R^6}, \\
\notag \SelectTips{cm}{} \xymatrix{ H_{-1} (B) = 0 & \ar[l]_-{(e
\wedge)^\prime} H_1 (B) \cong \R^4 & \ar[l]_-{(\pi^*)^\prime} H_1
(P) & \ar[l]_-{\pi_*^\prime} H_0 (B) \cong \R & \ar[l]_-{(e
\wedge)^\prime} H_2 (B) \cong \R^6}.
\end{align}
}In (\ref{sequence: gysin s=1.5 p=-1}), the first $e \wedge$ is a
zero map, and the second $e \wedge$ is injective because $e$ is
nontrivial. By exactness,
\begin{equation*}
\ker \pi^* = 0 \quad \text{ and } \quad  \im \pi_* = 0.
\end{equation*}
So,
\begin{equation*}
\im (\pi^*)^\prime = H_1 (B) \quad \text{ and } \quad \ker
\pi_*^\prime = H_0 (B).
\end{equation*}
This means that $(\pi^*)^\prime$ is isomorphic. So,
\begin{equation*}
H_1 \big( \mu^{-1}(1.5) \big) ~ = ~ \langle ~ L_i^{1.5} ~|~ 1 \le i
\le 4 ~ \rangle ~ \cong ~ \R^4.
\end{equation*}
Moreover, any fiber of $P$ is trivial in $H_1 (P)$ because
$(\pi^*)^\prime$ is equal to the homology functor $H_1(\pi)$ and
hence a fiber is sent to 0 by the isomorphism $(\pi^*)^\prime.$

Next, we calculate $H_2 (P).$ Substituting $p=0$ into
(\ref{sequence: gysin}) and (\ref{sequence: homological gysin}), we
obtain the followings:{\footnotesize
\begin{align} \label{sequence: gysin s=1.5 p=0}
\SelectTips{cm}{} \xymatrix{ H^0 (B) \cong \R \ar[r]^-{e \wedge} &
H^2 (B) \cong \R^6 \ar[r]^-{\pi^*} & H^2 (P) \ar[r]^-{\pi_*} & H^1
(B) \cong \R^4 \ar[r]^-{e \wedge}  & H^3 (B) \cong \R^4}, \\
\notag \SelectTips{cm}{} \xymatrix{ H_0 (B) \cong \R & \ar[l]_-{(e
\wedge)^\prime} H_2 (B) \cong \R^6 & \ar[l]_-{(\pi^*)^\prime} H_2
(P) & \ar[l]_-{\pi_*^\prime} H_1 (B) \cong \R^4 & \ar[l]_-{(e
\wedge)^\prime} H_3 (B) \cong \R^4}.
\end{align}
}In (\ref{sequence: gysin s=1.5 p=0}), the image of the first $e
\wedge$ is equal to $\langle \sigma_{42} \rangle,$ and the kernel of
the second $e \wedge$ is equal to $\langle \sigma_2, \sigma_4
\rangle.$ By exactness,
\begin{equation*}
\ker \pi^* = \langle \sigma_{42} \rangle \quad \text{ and } \quad
\im \pi_* = \langle \sigma_2, \sigma_4 \rangle.
\end{equation*}
So,
\begin{align*}
\im (\pi^*)^\prime &= \langle \sigma_{42} \rangle^\perp = \langle
L_{12}, ~ L_{13}, ~ L_{14}, ~ L_{23}, ~ L_{34} \rangle, \\
\ker \pi_*^\prime &= \langle \sigma_2, \sigma_4 \rangle^\perp =
\langle L_1, ~ L_3 \rangle.
\end{align*}
From these, we obtain
\begin{align*}
\im \pi_*^\prime &= \langle ~ \pi_*^\prime (L_2), ~ \pi_*^\prime
(L_4) ~ \rangle, \\
H_2 (P) &= \langle ~ L_{12}^{1.5}, ~ L_{13}^{1.5}, ~ L_{14}^{1.5}, ~
L_{23}^{1.5}, ~ L_{34}^{1.5}, ~ \pi_*^\prime (L_2), ~ \pi_*^\prime
(L_4) ~ \rangle \cong \R^7.
\end{align*}
Substituting $C=L_i$ into (\ref{diagram: naturality for circle}), we
obtain
\begin{equation*}
\pi_*^\prime (L_2) = L_{2 \fiber}^{1.5}, \qquad \pi_*^\prime (L_4) =
L_{4 \fiber}^{1.5} , \qquad L_{1 \fiber}^{1.5} = L_{3 \fiber}^{1.5}
= 0
\end{equation*}
in $H_2 (P).$ That is,
\begin{equation} \label{homology: n=2 s=1.5}
H_2 \big( \mu^{-1}(1.5) \big) = \langle ~ L_{12}^{1.5}, ~
L_{13}^{1.5}, ~ L_{14}^{1.5}, ~ L_{23}^{1.5}, ~ L_{34}^{1.5} ~
\rangle + \langle ~ L_{2 \fiber}^{1.5}, ~ L_{4 \fiber}^{1.5} ~
\rangle   \cong ~ \R^7
\end{equation}
where $L_{1 \fiber}^{1.5} = L_{3 \fiber}^{1.5} = 0$ in $H_2 \big(
\mu^{-1}(1.5) \big).$

\subsection{Homology of $\mu^{-1}(s)$ for $s=3.5$}

In this case, $c_1 (P)=-[\sigma_{31}+\sigma_{42}],$ and
$L_{13}^{3.5}, L_{24}^{3.5}$ do not exist. Also, pick a class in
$H_2 (\pi)^{-1} (L_{13}-L_{24})$ and call it
$(L_{13}-L_{24})^{3.5}.$ This is possible because the pairing of
$c_1 (P)$ and $L_{13}-L_{24}$ is zero.

First, calculation of $H_1 (P)$ is similar to $s=1.5.$ That is,
\begin{equation*}
H_1 \big( \mu^{-1}(3.5) \big) ~ = ~ \langle ~ L_i^{3.5} ~|~ 1 \le i
\le 4 ~ \rangle ~ \cong ~ \R^4,
\end{equation*}
and any fiber of $P$ is trivial in $H_1 (P).$

Next, we calculate $H_2 (P).$ Substituting $p=0$ into
(\ref{sequence: gysin}) and (\ref{sequence: homological gysin}), we
obtain the followings:{\footnotesize
\begin{align} \label{sequence: gysin s=3.5 p=0}
\SelectTips{cm}{} \xymatrix{ H^0 (B) \cong \R \ar[r]^-{e \wedge} &
H^2 (B) \cong \R^6 \ar[r]^-{\pi^*} & H^3 (P) \ar[r]^-{\pi_*} & H^1
(B) \cong \R^4 \ar[r]^-{e \wedge}  & H^3 (B) \cong \R^4}, \\
\notag \SelectTips{cm}{} \xymatrix{ H_0 (B) \cong \R & \ar[l]_-{(e
\wedge)^\prime} H_2 (B) \cong \R^6 & \ar[l]_-{(\pi^*)^\prime} H_2
(P) & \ar[l]_-{\pi_*^\prime} H_1 (B) \cong \R^4 & \ar[l]_-{(e
\wedge)^\prime} H_3 (B) \cong \R^4}.
\end{align}
}In (\ref{sequence: gysin s=3.5 p=0}), the image of the first $e
\wedge$ is equal to $\langle \sigma_{31}+\sigma_{42} \rangle,$ and
the kernel of the second $e \wedge$ is trivial. By exactness,
\begin{equation*}
\ker \pi^* = \langle \sigma_{31}+\sigma_{42} \rangle \quad \text{
and } \quad \im \pi_* = \langle 0 \rangle.
\end{equation*}
So,
\begin{align*}
\im (\pi^*)^\prime &= \langle \sigma_{31}+\sigma_{42} \rangle^\perp
= \langle
L_{12}, ~ L_{13}-L_{24}, ~ L_{14}, ~ L_{23}, ~ L_{34} \rangle, \\
\ker \pi_*^\prime &= \langle 0 \rangle^\perp = H_1 (B).
\end{align*}
From these, we obtain $\im \pi_*^\prime = \langle 0 \rangle,$ and
\begin{equation} \label{homology: n=2 s=3.5}
H_2 \big( \mu^{-1}(3.5) \big) = \langle ~ L_{12}^{3.5}, ~
(L_{13}-L_{24})^{3.5}, ~ L_{14}^{3.5}, ~ L_{23}^{3.5}, ~
L_{34}^{3.5} ~ \rangle \cong \R^5.
\end{equation}
Substituting $C=L_i$ into (\ref{diagram: naturality for circle}), we
obtain $L_{i \fiber}^{3.5} = 0$ in $H_2 (\mu^{-1}(3.5))$ for $1 \le
i \le 4.$

\section{Elementary cobordism} \label{section: elementary cobordism}
In this section, we study an elementary cobordism $\mu^{-1}[a, b]$
for two regular values $a < b$ of $\mu$ such that $\lambda =
\mu(Z^\lambda)$ is the unique critical value between them.

For each critical submanifold $Z^\lambda$ of $X,$ the almost
K\"{a}hler structure $J$ induces a complex structure on the normal
bundle $\nu$ of $Z^\lambda$ in $X$ which splits as a sum $\nu =
\nu^- \oplus \nu^+.$ By the equivariant symplectic neighborhood
theorem, we may assume that a small neighborhood $N_\epsilon$ of $Z$
for a sufficiently small $\epsilon$ is equivariantly symplectically
diffeomorphic to the interior of $D_\epsilon(\nu)$ of the zero
section of $\nu$ where $D_\epsilon(\cdot)$ is the disc bundle with
radius $\epsilon.$ And, we may assume that
$D_\epsilon(\nu_\epsilon^-)$ and $D_\epsilon(\nu_\epsilon^+)$ are
contained in stable and unstable manifolds of $Z^\lambda$ with
respect to the vector field $J \xi,$ respectively. To calculate
homology groups of $\mu^{-1}[a, b],$ we need calculate the first
Chern classes of $\nu^\pm.$ For this, we will show that
\begin{align}
\label{equation: chern class 1} c_1 (\nu^-) &= c_1 \Big(
(\hat{\pi}|_{Z^\lambda})^* \mu^{-1}(a) \Big), \qquad c_1 (\nu^+) =
c_1 \Big(
(\hat{\pi}|_{Z^\lambda})^* \mu^{-1}(b) \Big),   \\
\label{equation: chern class 2} c_1(\nu^-) &= - c_1(\nu^+)
\end{align}
where $\mu^{-1}(a),$ $\mu^{-1}(b)$ are regarded as circle bundles
over $B.$ We can observe that $Z^\lambda$ are connected by $J \xi$
to $\mu^{-1}(a)|_{\hat{\pi}(Z^\lambda)}$ and
$\mu^{-1}(b)|_{\hat{\pi}(Z^\lambda)}.$ Since the flow of $J \xi$ is
equivariant, this observation means that
\begin{equation*}
S_\epsilon (\nu^-) \cong (\hat{\pi}|_{Z^\lambda})^* \mu^{-1}(a)
\quad \text{ and } \quad S_\epsilon (\nu^+) \cong
(\hat{\pi}|_{Z^\lambda})^* \mu^{-1}(b)
\end{equation*}
as $S^1$-bundles where $S_\epsilon (\cdot)$ is the sphere bundle
with radius $\epsilon.$ So, we obtain (\ref{equation: chern class
1}). Also, we obtain (\ref{equation: chern class 2}) because the
normal bundle of $\hat{\pi}(Z^\lambda)$ in $B$ is isomorphic to
$\nu^- \otimes \nu^+$ by \cite[p. 156]{McD} and is trivial. By using
(\ref{equation: chern class 1}) and (\ref{equation: chern class 2}),
we can calculate $c_1(\nu^\pm)$ as follows:
\begin{equation} \label{equation: chern classes}
\begin{array}{ll}
c_1(\nu^-) = -c_1(\nu^+) = 0     \qquad & \text{for } \lambda=1, 6, \\
c_1(\nu^-) = -c_1(\nu^+) = \pm 1 \qquad & \text{for } \lambda=2, 5
\end{array}
\end{equation}
up to orientation of $Z^\lambda.$ When $\hat{\pi}(Z^\lambda) =
L_{ij},$ put $Z_i^\lambda = \hat{\pi}^{-1}(L_i) \cap
Z_{ij}^\lambda.$ When $S_\epsilon (\nu^\pm)$ is trivial, pick a
section of $S_\epsilon (\nu^\pm)$ and denote it by $Z_{ij}^{\lambda,
\pm}.$ And, pick a section of $S_\epsilon (\nu^\pm|_{Z_i^\lambda})$
and denote it by $Z_i^{\lambda, \pm}.$ For simplicity, we also
denote
\begin{equation*}
S_\epsilon (\nu^\pm)|_{Z_i^\lambda}, \quad S_\epsilon
(\nu^\pm)|_{Z_j^\lambda} \qquad \text{by} \qquad Z_{i
\fiber}^{\lambda, \pm}, \quad Z_{j \fiber}^{\lambda, \pm},
\end{equation*}
respectively. And, let $Z_\fiber^{\lambda, \pm}$ be a fiber of
$S_\epsilon (\nu^\pm).$ By using Gysin sequence as in Section
\ref{section: level set}, we can calculate homology groups of sphere
bundles $S_\epsilon (\nu^\pm)$ as follows:
\begin{equation} \label{homology: sphere bundle 1}
H_1(S_\epsilon (\nu^\pm)) = \left\{
  \begin{array}{ll}
    \langle ~ Z_i^{\lambda, \pm}, ~ Z_j^{\lambda, \pm}, ~ Z_\fiber^{\lambda, \pm} ~ \rangle ~
    \cong ~ \R^3
    \qquad  & \text{for } \lambda=1, 6, \\
    \langle ~ Z_i^{\lambda, \pm}, ~ Z_j^{\lambda, \pm} ~ \rangle ~
    \cong ~ \R^2
    \qquad  & \text{for } \lambda=2, 5,
  \end{array}
\right.
\end{equation}
and
\begin{equation} \label{homology: sphere bundle 2}
H_2(S_\epsilon (\nu^\pm)) = \left\{
  \begin{array}{ll}
    \langle Z_{i \fiber}^{\lambda, \pm}, Z_{j \fiber}^{\lambda, \pm},
    Z_{i j}^{\lambda, \pm} \rangle ~ \cong ~ \R^3
    \qquad  & \text{for } \lambda=1, 6, \\
    \langle Z_{i \fiber}^{\lambda, \pm}, Z_{j \fiber}^{\lambda, \pm} \rangle ~
    \cong ~ \R^2
    \qquad  & \text{for } \lambda=2, 5.
  \end{array}
\right.
\end{equation}

\begin{remark} \label{remark: definition of sections}
In this section, $Z_i^{\lambda, \pm}$ and $Z_{ij}^{\lambda, \pm}$
are arbitrary sections by definition. However, in the next section,
we will designate more specific sections to these.
\end{remark}

Before we go further, we review Mayer-Vietoris sequence. For a pair
of subspaces $A, A^\prime$ of a topological space $Y$ such that $Y$
is the union of the interiors of $A, A^\prime,$ this exact sequence
has the form:
\begin{equation} \label{sequence: MV}
\footnotesize \SelectTips{cm}{} \xymatrix{ \cdots ~ \ar[r] & H_{n+1}
(Y) \ar[r]^-{\partial_*} & H_n ( A \cap A^\prime ) \ar[r]^-{(i_*,
j_*)} & H_n (A) \oplus H_n (A^\prime) \ar[r]^-{k_*-l_*}  & H_n (Y) }
\end{equation}
\begin{equation*}
\footnotesize \SelectTips{cm}{} \xymatrix{ \ar[r]^-{\partial_*} &
H_{n-1} ( A \cap A^\prime ) \ar[r] & ~ \cdots ~ \ar[r] & H_0 (A)
\oplus H_0 (A^\prime) \ar[r]^-{k_*-l_*} & H_0 (Y) \ar[r] & 0. }
\end{equation*}
The boundary maps $\partial_*$ lowering the dimension may be made
explicit as follows. An element $y$ in $H_n(Y)$ is the homology
class of an $n$-cycle $Y$ which, by barycentric subdivision for
example, can be written as the sum of two $n$-chains $u$ and $v$
whose images lie wholly in $A$ and $A^\prime,$ respectively. Thus
$\partial y = \partial (u + v) = 0$ so that $\partial u = -\partial
v.$ This implies that the images of both these boundary $(n -
1)$-cycles are contained in the intersection $A \cap A^\prime.$
Then, $\partial_*([x])$ is the class of $\partial u \in H_{n-1} (A
\cap A^\prime).$

Since $\mu$ is a Morse-Bott function, the elementary cobordism
$\mu^{-1} [a, b]$ is homeomorphic to the attaching space
\begin{equation} \label{equation: attach1}
\mu^{-1} [ a, a^\prime ] \quad \bigcup_f \quad D_\epsilon (\nu^-)
\oplus D_\epsilon (\nu^+)
\end{equation}
for $a < a^\prime < \lambda$ with a attaching map $f : S_\epsilon
(\nu^-) \oplus D_\epsilon (\nu^+) \rightarrow \mu^{-1} (a^\prime)$
by \cite[Section 11]{P}, \cite{W}. Here, the restriction of $f$ to
$S_\epsilon (\nu^-)$
\begin{equation*}
f|_{S_\epsilon (\nu^-)} : S_\epsilon (\nu^-) \oplus 0
\longrightarrow \mu^{-1} (a^\prime) \big|_{\hat{\pi}(Z^\lambda)}
\end{equation*}
is defined by the flow of $J \xi.$ Similarly, $\mu^{-1} [a, b]$ is
homeomorphic to the attaching space
\begin{equation} \label{equation: attach2}
D_\epsilon (\nu^-) \oplus D_\epsilon (\nu^+) \quad
\bigcup_{f^\prime} \quad \mu^{-1} [ b^\prime, b ]
\end{equation}
for $\lambda < b^\prime < b$ with a attaching map $f^\prime :
D_\epsilon (\nu^-) \oplus S_\epsilon (\nu^+) \rightarrow \mu^{-1}
(b^\prime)$ whose restriction to $S_\epsilon (\nu^+)$ is defined by
the flow of $J \xi.$ Since $D_\epsilon (\nu^\pm) \oplus S_\epsilon
(\nu^\mp)$ is homotopically equivalent to $S_\epsilon (\nu^\mp),$ we
can obtain $H_1$ and $H_2$ of $D_\epsilon (\nu^\pm) \oplus
S_\epsilon (\nu^\mp)$ by (\ref{homology: sphere bundle 1}),
(\ref{homology: sphere bundle 2}). Furthermore since we already know
homology groups of level sets, we can calculate homology of the
elementary cobordism by using Mayer-Vietoris sequence. More
precisely, we will calculate the first and second homology groups of
two elementary cobordisms
\begin{equation} \label{equation: two elementary cobordisms}
\mu^{-1}[0, 1.5] \quad \text{ and } \quad \mu^{-1}[1.5, 3.5].
\end{equation}
in two ways of (\ref{equation: attach1}), (\ref{equation: attach2}).
Then, we will calculate cohomology of
\begin{equation*}
\mu^{-1}[0, 3.5] ~=~ \mu^{-1}[0, 1.5] ~ \cup ~ \mu^{-1}[1.5, 3.5],
\end{equation*}
and finally $W$ again by Mayer-Vietoris sequence. When we apply
Mayer-Vietoris sequence to attaching (\ref{equation: attach1}) or
(\ref{equation: attach2}) in the below, we will use the following
notations:
\begin{align*}
Y        &= \mu^{-1} [ a, a^\prime ] \quad \bigcup_f \quad
D_\epsilon (\nu^-) \oplus D_\epsilon (\nu^+)
, \\
A        &= \mu^{-1} [ a, a^\prime ], \qquad A^\prime = D_\epsilon
(\nu^-) \oplus D_\epsilon (\nu^+),
\end{align*}
or
\begin{align*}
Y        &= D_\epsilon (\nu^-) \oplus D_\epsilon (\nu^+) \quad
\bigcup_{f^\prime} \quad \mu^{-1} [ b^\prime, b ]
, \\
A        &= D_\epsilon (\nu^-) \oplus D_\epsilon (\nu^+), \qquad
A^\prime = \mu^{-1} [ b^\prime, b ],
\end{align*}
respectively.

%Here, we recall from \cite[Section 3]{McD} that there exists a
%diffeomorphism
%\begin{equation*}
%C : ~ \big( \mu^{-1}[0, 3.5]-\Crit_\mu \big)^* \quad \longrightarrow
%\quad \mu^{-1}[3.5, 7]-\Crit_\mu, \qquad x \mapsto C(x),
%\end{equation*}
%where $^*$ denotes the dual $S^1$-bundle, such that
%\begin{enumerate}
%  \item if $\hat{\pi}(x)=(x^1, x^2, x^3, x^4),$ then
%  $\hat{\pi} \big( C(x) \big) =(x^4, x^3, x^2, x^1),$
%  \item $\mu(x) + \mu \big( C(x) \big) = 7,$
%  \item $X$ is obtained by gluing
%  $\mu^{-1}(3.5) \subset \mu^{-1}[0, 3.5]$ with
%  $\mu^{-1}(3.5) \subset \mu^{-1}[3.5, 7]$
%  to satisfy $x^1, x^2 \mapsto x^4, x^3.$
%\end{enumerate}
%No detail even in \cite{McD} on this. Anyway, calculation of
%homology groups of the former two of (\ref{equation: four elementary
%cobordisms}) is similar to the latter two. So, we may not need to
%write full details on the latter two. More precisely, $H_i$ of
%$\mu^{-1} [a, b]$ is similar to $H_i$ of $\mu^{-1} [7-b, 7-a],$ and
%maps $H_i(\mu^{-1} (a)) \rightarrow H_i(\mu^{-1} [a, b])$ and
%$H_i(\mu^{-1} (b)) \rightarrow H_i(\mu^{-1} [a, b])$ induced by
%inclusions behave similar to $H_i(\mu^{-1} (7-a)) \rightarrow
%H_i(\mu^{-1} [7-b, 7-a])$ and $H_i(\mu^{-1} (7-b)) \rightarrow
%H_i(\mu^{-1} [7-b, 7-a]),$ respectively.

\section{The first homology groups of elementary cobordisms}
\label{section: H_1 of elementary cobordism}

In this section, we calculate the first homology group of elementary
cobordisms of (\ref{equation: two elementary cobordisms}). As stated
in Remark \ref{remark: definition of sections}, we will define
$Z_i^{\lambda, \pm},$ $Z_{i \fiber}^{\lambda, \pm},$
$Z_{ij}^{\lambda, \pm}$ more precisely by specifying their images
through $i_*$ in Mayer-Vietoris sequences appearing in this and the
next sections.

Let $\mu^{-1} [ a, b ]$ be one of (\ref{equation: two elementary
cobordisms}), and let $\lambda$ be the unique critical value between
$a$ and $b.$ Let $\nu^\pm$ be normal bundles over $Z^\lambda.$ To
calculate $H_1(\mu^{-1} [ a, b ]),$ we apply Mayer-Vietoris sequence
to (\ref{equation: attach1}), (\ref{equation: attach2}) for $n=1,$
$\lambda=1, 2$ as follows:
\begin{enumerate}
  \item when we apply Mayer-Vietoris sequence to (\ref{equation: attach1}) for
$\lambda=1,$ { \footnotesize
\begin{equation} \label{sequence: MV n=1 l=1 attach1}
\begin{array}{ccccc}
H_1 ( A \cap A^\prime ) \cong \R^3 & ~^{\underrightarrow{~ (i_*,
j_*) ~}} & H_1 (A) \oplus H_1 (A^\prime) \cong \R^5 \oplus \R^2 &
~^{\underrightarrow{~ k_*-l_* ~}} & H_1 (Y) \rightarrow 0 \\
Z_1^{1, -}        & \longmapsto & (L_1^0, Z_1^1)     & & \\
Z_3^{1, -}        & \longmapsto & (L_3^0, Z_3^1)     & & \\
Z_\fiber^{1, -}   & \longmapsto & (L_\fiber^0, 0),   & &
\end{array}
\end{equation}
}
  \item when we apply Mayer-Vietoris sequence to (\ref{equation: attach2}) for
$\lambda=1,$ { \footnotesize
\begin{equation} \label{sequence: MV n=1 l=1 attach2}
\begin{array}{ccccc}
H_1 ( A \cap A^\prime ) \cong \R^3 & ~^{\underrightarrow{~ (i_*,
j_*) ~}} & H_1 (A) \oplus H_1 (A^\prime) \cong \R^2 \oplus \R^4 &
~^{\underrightarrow{~ k_*-l_* ~}} & H_1 (Y) \rightarrow 0 \\
Z_1^{1, +}             & \longmapsto & (Z_1^1, L_1^{1.5}) & & \\
Z_3^{1, +}             & \longmapsto & (Z_3^1, L_3^{1.5}) & & \\
Z_\fiber^{\lambda, +}  & \longmapsto & (0, 0),            & &
\end{array}
\end{equation}
}
  \item when we apply Mayer-Vietoris sequence to (\ref{equation: attach1}) for
$\lambda=2,$ { \footnotesize
\begin{equation} \label{sequence: MV n=1 l=2 attach1}
\begin{array}{ccccc}
H_1 ( A \cap A^\prime ) \cong \R^2 & ~^{\underrightarrow{~ (i_*,
j_*) ~}} & H_1 (A) \oplus H_1 (A^\prime) \cong \R^4 \oplus \R^2 &
~^{\underrightarrow{~ k_*-l_* ~}} & H_1 (Y) \rightarrow 0 \\
Z_2^{2, -} & \longmapsto & (L_2^{1.5}, Z_2^2)    & & \\
Z_4^{2, -} & \longmapsto & (L_4^{1.5}, Z_4^2),   & &
\end{array}
\end{equation}
}
  \item when we apply Mayer-Vietoris sequence to (\ref{equation: attach2}) for
$\lambda=2,$ { \footnotesize
\begin{equation} \label{sequence: MV n=1 l=2 attach2}
\begin{array}{ccccc}
H_1 ( A \cap A^\prime ) \cong \R^2 & ~^{\underrightarrow{~ (i_*,
j_*) ~}} & H_1 (A) \oplus H_1 (A^\prime) \cong \R^2 \oplus \R^4 &
~^{\underrightarrow{~ k_*-l_* ~}} & H_1 (Y) \rightarrow 0 \\
Z_2^{2, +} & \longmapsto & (Z_2^2, L_2^{3.5})   & & \\
Z_4^{2, +} & \longmapsto & (Z_4^2, L_4^{3.5}).  & &
\end{array}
\end{equation}
}
\end{enumerate}
By observing these sequences, we can show the followings:
\begin{itemize}
  \item[i.] $H_1(\mu^{-1} [a, b]) \cong \R^4,$
  \item[ii.] $H_1 (\hat{\pi}) : H_1 \big( \mu^{-1} [a, b] \big)
  \longrightarrow H_1 (B)$ is isomorphic,
  \item[iii.] For $s=a$ or $b,$ inclusion induces { \footnotesize
\begin{equation} \label{inclusion: n=1}
\begin{array}{rcc}
H_1 ( \mu^{-1}(s) )  & \longrightarrow & H_1 (\mu^{-1}[a, b])   \\
        L_\fiber^s   & \longmapsto & 0,         \\
             L_i^s   & \longmapsto & L_i^s
\end{array}
\end{equation}
}where $L_\fiber^s$ might be trivial in $H_1 ( \mu^{-1}(s)).$
\end{itemize}

\section{The second homology groups of elementary cobordisms}
\label{section: H_2 of elementary cobordism}

In this section, we calculate the second homology groups of
elementary cobordisms of (\ref{equation: two elementary
cobordisms}). For this, we apply Mayer-Vietoris sequence to
(\ref{equation: attach1}), (\ref{equation: attach2}) for $n=2,$
$\lambda=1, 2.$ As we have showed three thing on the first homology
in the previous section, we will describe the second homology groups
by their generators, and deal with maps induced by $\hat{\mu}$ and
inclusions.

\subsection{$H_2$ of $\mu^{-1}[0, 1.5]$}

Applying Mayer-Vietoris sequence to (\ref{equation: attach1}) for
$n=2,$ $\lambda=1,$ we obtain the followings: { \footnotesize
\begin{equation} \label{sequence: MV n=2 l=1 attach1}
\begin{array}{rclcc}
H_2 ( A \cap A^\prime ) \cong \R^3 & ~^{\underrightarrow{~ (i_*,
j_*) ~}} & H_2 (A) \oplus H_2 (A^\prime) \cong \R^{10} \oplus \R^1 &
~^{\underrightarrow{~ k_*-l_* ~}} & H_2 (Y) ~^{\underrightarrow{~ \partial_* ~}} \\
Z_{1\fiber}^{1, -}      & \longmapsto & (L_{1 \fiber}^0, 0),        & & \\
Z_{3 \fiber}^{1, -}     & \longmapsto & (L_{3 \fiber}^0, 0),        & & \\
Z_{13}^{1, -}           & \longmapsto & (L_{13}^0, Z_{13}^1). & & \\
\end{array}
\end{equation}
}So, the rank of $\im (i_*, j_*)$ is rank 3. Since $(i_*, j_*)$ for
$n=1$ is injective, $\partial_*$ is a zero-map. This implies that
$H_1 ( Y )$ is rank 3, and we can check that
\begin{equation} \label{homology: n=2 l=1 attach1}
H_2 (\mu^{-1}[0, 1.5]) = \big\langle L_{12}^0, L_{13}^0, L_{14}^0,
L_{23}^0, L_{24}^0, L_{34}^0 \big\rangle + \big\langle L_{2
\fiber}^0, L_{4 \fiber}^0 \big\rangle \cong \R^8
\end{equation}
in which $L_{13}^0 = Z_{13}^1$ because $(L_{13}^0, Z_{13}^1)$ is
contained in $\im (i_*, j_*).$ The map $\hat{\mu}$ induces the
following map: { \footnotesize
\begin{equation} \label{project: n=2 l=1 attach1}
\begin{array}{rcl}
H_2 ( \mu^{-1}[0, 1.5] ) \cong \R^8 & \longrightarrow & H_2 (T^4) \cong \R^6  \\
L_{i \fiber}^0      & \longmapsto & 0      \quad \qquad \text{ for } i=2, 4,         \\
L_{ij}^0            & \longmapsto & L_{ij} \qquad \text{ for } 1 \le i \ne j \le 4.  \\
\end{array}
\end{equation}
}And, the inclusion induces the following surjection: {
\footnotesize
\begin{equation} \label{inclusion: n=2 l=1 attach1}
\begin{array}{rcl}
H_2 ( \mu^{-1}(0) ) \cong \R^{10}  & \longrightarrow & H_2 (\mu^{-1}[0, 1.5]) \cong \R^8  \\
L_{i \fiber}^0    & \longmapsto & 0 \qquad \quad       \text{ for } i=1, 3,         \\
L_{i \fiber}^0    & \longmapsto & L_{i \fiber}^0 \qquad \text{ for } i=2, 4,     \\
L_{ij}^0          & \longmapsto & L_{ij}^0 \qquad       \text{ for } 1 \le i \ne j \le 4. \\
\end{array}
\end{equation}
}

Applying Mayer-Vietoris sequence to (\ref{equation: attach2}) for
$n=2,$ $\lambda=1,$ we obtain the followings: { \footnotesize
\begin{equation} \label{sequence: MV n=2 l=1 attach2}
\begin{array}{rclcc}
H_2 ( A \cap A^\prime ) \cong \R^3 & ~^{\underrightarrow{~ (i_*,
j_*) ~}} & H_2 (A) \oplus H_2 (A^\prime) \cong \R^1 \oplus \R^7 &
~^{\underrightarrow{~ k_*-l_* ~}} & H_2 (Y) ~^{\underrightarrow{~ \partial_* ~}} \\
Z_{1 \fiber}^{1, +}, ~ Z_{3 \fiber}^{1, +}    & \longmapsto & 0,        & & \\
Z_{13}^{1, +}                                 & \longmapsto & (Z_{13}^1, L_{13}^{1.5}). & & \\
\end{array}
\end{equation}
}By this,
\begin{equation*}
H_2 (\mu^{-1}[0, 1.5]) \supset \big\langle L_{12}^{1.5},
L_{13}^{1.5}, L_{14}^{1.5}, L_{23}^{1.5}, L_{34}^{1.5} \big\rangle +
\big\langle L_{2 \fiber}^{1.5}, L_{4 \fiber}^{1.5} \big\rangle \cong
\R^7,
\end{equation*}
and $Z_{13}^1=L_{13}^{1.5}.$ Comparing this with (\ref{sequence: MV
n=2 l=1 attach1}), we have
\begin{equation} \label{homology: n=2 l=1 attach2}
H_2 (\mu^{-1}[0, 1.5]) = \big\langle L_{12}^{1.5}, L_{13}^{1.5},
L_{14}^{1.5}, L_{23}^{1.5}, L_{34}^{1.5} \big\rangle + \big\langle
L_{2 \fiber}^{1.5}, L_{4 \fiber}^{1.5} \big\rangle + \big\langle
L_{24}^0 \big\rangle \cong \R^8.
\end{equation}
The map $\hat{\mu}$ induces the following surjection: {
\footnotesize
\begin{equation} \label{project: n=2 l=1 attach2}
\begin{array}{rcl}
H_2 ( \mu^{-1}[0, 1.5] ) \cong \R^8  & \longrightarrow  & H_2 (T^4) \cong \R^6   \\
L_{i \fiber}^{1.5}                   & \longmapsto      & 0 \quad \qquad \text{ for } i=2, 4,         \\
L_{ij}^{1.5}                         & \longmapsto      & L_{ij}
\qquad \text{ for } 1 \le i \ne j \le 4, ~ \{i,j\} \ne \{2,4\}, \\
L_{24}^0                             & \longmapsto      & L_{24}.  \\
\end{array}
\end{equation}
}And, the inclusion induces the following injection: { \footnotesize
\begin{equation} \label{inclusion: n=2 l=1 attach2}
\begin{array}{rcl}
H_2 ( \mu^{-1}(1.5) ) \cong \R^7 & \longrightarrow & H_2 (\mu^{-1}[0, 1.5]) \cong \R^8   \\
L_{i \fiber}^{1.5}   & \longmapsto & L_{i \fiber}^{1.5}  \qquad \text{ for } i=2, 4,     \\
L_{ij}^{1.5}         & \longmapsto & L_{ij}^{1.5} \qquad \text{ for
} 1 \le i \ne j \le 4, ~ \{i,j\} \ne \{2,4\},
\end{array}
\end{equation}
}and $L_{24}^0$ is not contained in its image.

Comparing (\ref{project: n=2 l=1 attach1}) with (\ref{project: n=2
l=1 attach2}), maps (\ref{inclusion: n=2 l=1 attach1}),
(\ref{inclusion: n=2 l=1 attach2}) give us relations between
$Z_{13}^1$ and all generators of $\mu^{-1}(0),$ $\mu^{-1}(1.5)$ as
follows: { \footnotesize
\begin{equation}
\label{relation: n=2 l=1}
\begin{array}{ll}
L_{ij}^0 = L_{ij}^{1.5}             & \qquad \text{for } 1 \le i \ne
j \le
4, ~\{i,j\} \ne \{2,4\}, \\
L_{i\fiber}^0 = L_{i\fiber}^{1.5}   & \qquad \text{for } i=2, 4, \\
L_{i\fiber}^0 = 0                   & \qquad \text{for } i=1, 3, \\
Z_{13}^1 = L_{13}^0 = L_{13}^{1.5}, & \\
L_{24}^0 = L_{24}^0                 &
\end{array}
\end{equation}
}up to $\langle L_{2\fiber}^0, L_{4\fiber}^0 \rangle = \langle
L_{2\fiber}^{1.5}, L_{4\fiber}^{1.5} \rangle$ in $H_2 ( \mu^{-1}[0,
1.5] )$ where $L_{24}^0 = L_{24}^0$ means that there exists no
relation on $L_{24}^0.$

\subsection{$H_2$ of $\mu^{-1}[1.5, 3.5]$}

Applying Mayer-Vietoris sequence to (\ref{equation: attach1}) for
$n=2,$ $\lambda=2,$ we obtain the followings: { \footnotesize
\begin{equation} \label{sequence: MV n=2 l=2 attach1}
\begin{array}{rcccc}
H_2 ( A \cap A^\prime ) \cong \R^2 & ~^{\underrightarrow{~ (i_*,
j_*) ~}} & H_2 (A) \oplus H_2 (A^\prime) \cong \R^7 \oplus \R^1 &
~^{\underrightarrow{~ k_*-l_* ~}} & H_2 (Y) ~^{\underrightarrow{~ \partial_* ~}} \\
Z_{2 \fiber}^{2, -}  & \longmapsto & (L_{2 \fiber}^{1.5}, 0),        & & \\
Z_{4 \fiber}^{2, -}  & \longmapsto & (L_{4 \fiber}^{1.5}, 0).
& &
\end{array}
\end{equation}
}So, the rank of $\im (i_*, j_*)$ is rank two. Since $(i_*, j_*)$
for $n=1$ is injective, $\partial_*$ is a zero-map. This implies
that $H_1 ( Y )$ is rank six, and we can check that
\begin{equation} \label{homology: n=2 l=2 attach1}
H_2 (\mu^{-1}[1.5, 3.5]) = \big\langle L_{12}^{1.5}, L_{13}^{1.5},
L_{14}^{1.5}, L_{23}^{1.5}, L_{34}^{1.5} \big\rangle \oplus
\big\langle Z_{42}^2 \rangle \cong \R^6.
\end{equation}
The map $\hat{\mu}$ induces the following isomorphism: {
\footnotesize
\begin{equation} \label{project: n=2 l=2 attach1}
\begin{array}{rcl}
H_2 ( \mu^{-1}[1.5, 3.5] ) \cong \R^6 & \longrightarrow & H_2 (T^4) \cong \R^6   \\
Z_{42}^2   & \longmapsto & L_{42},         \\
L_{ij}^{1.5}   & \longmapsto & L_{ij} \qquad \text{ for } 1 \le i
\ne j \le 4, ~ \{i,j\} \ne \{2,4\}.
\end{array}
\end{equation}
}And, inclusion induces the following map: { \footnotesize
\begin{equation} \label{inclusion: n=2 l=2 attach1}
\begin{array}{rcl}
H_2 ( \mu^{-1}(1.5) ) \cong \R^7            & \longrightarrow & H_2 (\mu^{-1}[1.5, 3.5]) \cong \R^6   \\
L_{i \fiber}^{1.5}             & \longmapsto & 0 ~ \quad \qquad \text{ for } i=2, 4,         \\
L_{ij}^{1.5}                   & \longmapsto & L_{ij}^{1.5} \qquad
\text{ for } 1 \le i \ne j \le 4, ~ \{i,j\} \ne \{2,4\},
\end{array}
\end{equation}
}and $Z_{42}^2$ is not contained in its image.

Applying Mayer-Vietoris sequence to (\ref{equation: attach2}) for
$n=2,$ $\lambda=2,$ we obtain the followings: { \footnotesize
\begin{equation} \label{sequence: MV n=2 l=2 attach2}
\begin{array}{ccccc}
H_2 ( A \cap A^\prime ) \cong \R^2 & ~^{\underrightarrow{~ (i_*,
j_*) ~}} & H_2 (A) \oplus H_2 (A^\prime) \cong \R^1 \oplus \R^5 &
~^{\underrightarrow{~ k_*-l_* ~}} & H_2 (Y) ~^{\underrightarrow{~ \partial_* ~}} \\
Z_{2 \fiber}^{2, +}, ~ Z_{4 \fiber}^{2, +}  & \longmapsto & 0.        & & \\
\end{array}
\end{equation}
}So, the rank of $\im (i_*, j_*)$ is rank zero. Since $(i_*, j_*)$
for $n=1$ is injective, $\partial_*$ is a zero-map. This implies
that $H_1 ( Y )$ is rank six, and we can check that
\begin{equation} \label{homology: n=2 l=2 attach2}
H_2 (\mu^{-1}[1.5, 3.5]) = \big\langle L_{12}^{3.5},
(L_{13}-L_{24})^{3.5}, L_{14}^{3.5}, L_{23}^{3.5}, L_{34}^{3.5}
\rangle \oplus \big\langle Z_{42}^2 \big\rangle \cong \R^6.
\end{equation}
The map $\hat{\mu}$ induces the following isomorphism: {
\footnotesize
\begin{equation} \label{project: n=2 l=2 attach2}
\begin{array}{rcl}
H_2 ( \mu^{-1}[1.5, 3.5] ) \cong \R^6 & \longrightarrow & H_2 (T^4) \cong \R^6   \\
(L_{13}-L_{24})^{3.5}   & \longmapsto & L_{13} - L_{24},         \\
Z_{42}^2       & \longmapsto & L_{42},  \\
L_{ij}^{3.5}   & \longmapsto & L_{ij} \qquad \text{ for } 1 \le i
\ne j \le 4, ~ \{i,j\} \ne \{1,3\}, \{2,4\}.
\end{array}
\end{equation}
}And, the inclusion induces the following injection: { \footnotesize
\begin{equation} \label{inclusion: n=2 l=2 attach2}
\begin{array}{rcl}
H_2 ( \mu^{-1}(3.5) ) \cong \R^5 & \longrightarrow & H_2 (\mu^{-1}[1.5, 3.5]) \cong \R^6   \\
(L_{13}-L_{24})^{3.5}   & \longmapsto & (L_{13}-L_{24})^{3.5},         \\
L_{ij}^{3.5}   & \longmapsto & L_{ij}^{3.5} \qquad \text{ for } 1
\le i \ne j \le 4, ~ \{i,j\} \ne \{1,3\}, \{2,4\},
\end{array}
\end{equation}
}and $Z_{42}^2$ is not contained in its image.

Comparing (\ref{project: n=2 l=2 attach1}) with (\ref{project: n=2
l=2 attach2}), maps (\ref{inclusion: n=2 l=2 attach1}),
(\ref{inclusion: n=2 l=2 attach2}) give us relations between
$Z_{24}^2$ and all generators of $\mu^{-1}(1.5),$ $\mu^{-1}(3.5)$ as
follows: { \footnotesize
\begin{equation} \label{relation: n=2 l=2}
\begin{array}{ll}
L_{13}^{1.5}-Z_{24}^2 = (L_{13}-L_{24})^{3.5},  &   \\
L_{ij}^{1.5}          = L_{ij}^{3.5}            &
\qquad \text{for } 1 \le i \ne j \le 4, ~\{i,j\} \ne \{1,3\}, \{2,4\}, \\
L_{i\fiber}^{1.5}     = 0                       & \qquad \text{for } i=2, 4, \\
\end{array}
\end{equation}
}in $H_2 ( \mu^{-1}[1.5, 3.5] ).$

\section{Homology groups of the union of two elementary cobordisms}
\label{section: homology of [0, 3.5]}

In this section, we calculate homology groups of $\mu^{-1} [0,
3.5].$ Put
\begin{equation} \label{equation: union of two elementary
cobordisms}
Y = \mu^{-1} [0, 3.5], \qquad  A = \mu^{-1} [0, 1.5], \qquad
A^\prime = \mu^{-1} [1.5, 3.5].
\end{equation}
Applying Mayer-Vietoris sequence to (\ref{equation: union of two
elementary cobordisms}) for $n=1,$ we obtain the followings:

{ \footnotesize
\begin{equation} \label{sequence: MV n=1 l=1,2}
\begin{array}{cccccc}
H_1 ( A \cap A^\prime ) \cong \R^4 & ~^{\underrightarrow{~ (i_*,
j_*) ~}} & H_1 (A) \oplus H_1 (A^\prime) \cong \R^4 \oplus \R^4 &
~^{\underrightarrow{~ k_*-l_* ~}} & H_1 (Y) &~^{\underrightarrow{~
\partial_* ~}}  \\
H_0 ( A \cap A^\prime ) \cong \R^1 & ~^{\underrightarrow{~ (i_*,
j_*) ~}} & H_0 (A) \oplus H_0 (A^\prime) \cong \R^1 \oplus \R^1. &
 &   &  \\
\end{array}
\end{equation}
}Since two $(i_*, j_*)$'s are injective, $H_1 (\mu^{-1} [0, 3.5])$
is rank 4, and
\begin{equation*}
H_1 (\hat{\pi}) : H_1 \big( \mu^{-1} [0, 3.5] \big) \longrightarrow
H_1 (B)
\end{equation*}
is isomorphic.

Applying Mayer-Vietoris sequence to (\ref{equation: union of two
elementary cobordisms}) for $n=2,$ we obtain the followings: {
\footnotesize
\begin{equation} \label{sequence: MV n=2 l=1,2}
\begin{array}{rcccc}
H_2 ( A \cap A^\prime ) \cong \R^7 & ~^{\underrightarrow{~ (i_*,
j_*) ~}} & H_2 (A) \oplus H_2 (A^\prime) \cong \R^8 \oplus \R^6 &
~^{\underrightarrow{~ k_*-l_* ~}}
& H_2 (Y) ~^{\underrightarrow{~ \partial_* ~}} \\
L_{2 \fiber}^{1.5}  & \longmapsto  & (L_{2 \fiber}^{1.5}, 0),        & & \\
L_{4 \fiber}^{1.5}  & \longmapsto  & (L_{4 \fiber}^{1.5}, 0),        & & \\
L_{ij}^{1.5}        & \longmapsto  & (L_{ij}^{1.5}, L_{ij}^{1.5}).       & & \\
\end{array}
\end{equation}
}Since $(i_*, j_*)$ for $n=1$ is injective as we have seen in
(\ref{sequence: MV n=1 l=1,2}), $\partial_*$ is zero-map and hence
$k_*-l_*$ is surjective by exactness. Also since $(i_*, j_*)$ for
$n=2$ is injective, the rank of $\im k_*-l_*$ is 7. So, $H_2
(\mu^{-1} [0, 3.5])$ is rank 7. Next, we find generators of $H_2
(\mu^{-1} [0, 3.5]).$ By exactness, $\im (i_*, j_*)$ for $n=2$ gives
relation in $H_2 (Y).$ More precisely, two elements
\begin{equation*}
(L_{2 \fiber}^{1.5}, 0), ~ (L_{4 \fiber}^{1.5}, 0) \in H_2 (A)
\oplus H_2 (A^\prime)
\end{equation*}
give the relation $L_{2 \fiber}^{1.5}=0,$ $L_{4 \fiber}^{1.5}=0$ in
$H_2 (Y),$ respectively. By using (\ref{relation: n=2 l=1}),
(\ref{relation: n=2 l=2}) in addition to these relations, we can
check that $L_{12}^{1.5}, L_{13}^{1.5}, L_{14}^{1.5}, L_{23}^{1.5},
L_{34}^{1.5}, L_{24}^0, Z_{24}^2$ generate $H_2 (\mu^{-1} [0,
3.5]).$ And, this implies { \footnotesize
\begin{equation} \label{homology: n=2 l=1,2}
H_2 (\mu^{-1}[0, 3.5]) = \big\langle L_{12}^{1.5}, L_{13}^{1.5},
L_{14}^{1.5}, L_{23}^{1.5}, L_{34}^{1.5} \big\rangle + \big\langle
L_{24}^0, Z_{24}^2 \big\rangle \cong \R^7
\end{equation}
}because $H_2 (\mu^{-1} [0, 3.5])$ is rank 7. In this way, we can
also show that relations between $Z_{13}^1,$ $Z_{24}^2,$ and all
generators of $\mu^{-1}(0),$ $\mu^{-1}(1.5),$ $\mu^{-1}(3.5)$ are as
follows: { \footnotesize
\begin{equation} \label{relation: n=2 l=1,2}
\begin{array}{ll}
L_{ij}^0 = L_{ij}^{1.5} = L_{ij}^{3.5}                     &
\qquad \text{for } 1 \le i \ne j \le 4, ~\{i,j\} \ne \{1,3\}, \{2,4\}, \\
L_{i\fiber}^0 = L_{i\fiber}^{1.5} = L_{i\fiber}^{3.5} = 0  & \qquad \text{for } i=1,2,3,4, \\
L_{13}^0 = L_{13}^{1.5} = Z_{13}^1,                        &                            \\
L_{24}^0 = L_{24}^0,                                       &                            \\
(L_{13}-L_{24})^{3.5} = L_{13}^{1.5} - Z_{24}^2            &                            \\
\end{array}
\end{equation}
}in $H_2 ( \mu^{-1}[0,3.5] ).$

%???In the sequence, the author uses the dashed line
%$\dashrightarrow$ to mean that an element of $H_2 (A) \oplus H_2
%(A^\prime)$ gives a relation in $H_2 (Y).$ For example,
%$(L_{13}^{1.5}, L_{31}^7)$ gives the relation $L_{13}^{1.5} =
%L_{31}^7.$ First, the rank of $(i_*, j_*)$ is 9. We explain for
%this.
%
%???Inclusions induce the following maps: { \footnotesize
%\begin{equation} \label{inclusion: n=2 l=1,2 s=0}
%\begin{array}{rcl}
%H_2 ( \mu^{-1}(0) ) \cong \R^{10}  & \longrightarrow & H_2 (\mu^{-1}[0, 3.5]) \cong \R^7  \\
%L_{i \fiber}^0   & \longmapsto & 0 \quad \qquad \text{ for } 1 \le i \le 4,         \\
%L_{24}^0         & \longmapsto & L_{24}^0,         \\
%L_{ij}^0         & \longmapsto & L_{ij}^{1.5} \qquad \text{ for } 1
%\le i \ne j \le 4, ~ \{i,j\} \ne \{2,4\}
%\end{array}
%\end{equation}
%}where $Z_{24}^2$ is not in its image and $L_{13}^0=Z_{13}^1$ in
%$H_2 (\mu^{-1}[0, 3.5]),$ and { \footnotesize
%\begin{equation} \label{inclusion: n=2 l=1,2 s=3.5}
%\begin{array}{rcl}
%H_2 ( \mu^{-1}(3.5) ) \cong \R^5  & \longrightarrow  & H_2 (\mu^{-1}[0, 3.5]) \cong \R^7  \\
%(L_{13}-L_{24})^{3.5}             & \longmapsto      & L_{13}^{1.5}-Z_{24}^2,         \\
%L_{ij}^{3.5}                      & \longmapsto      & L_{ij}^{1.5}
%\qquad \text{ for } 1 \le i \ne j \le 4, ~ \{i,j\} \ne \{1,3\},
%\{2,4\}
%\end{array}
%\end{equation}
%}

In the exactly same way with $H_n(\mu^{-1} [0, 3.5])$ for $n=1,2,$
we can calculate $H_n(\mu^{-1} [3.5, 7])$ for $n=1,2$ as follows:  {
\footnotesize
\begin{equation} \label{homology: n=2 l=5,6}
H_2 (\mu^{-1}[3.5, 7]) = \big\langle L_{12}^{5.5}, L_{14}^{5.5},
L_{23}^{5.5}, L_{24}^{5.5}, L_{34}^{5.5} \big\rangle + \big\langle
L_{13}^7, Z_{13}^5 \big\rangle \cong \R^7.
\end{equation}
}And we can also show that relations between $Z_{13}^5,$ $Z_{24}^6,$
and all generators of $\mu^{-1}(3.5),$ $\mu^{-1}(5.5),$
$\mu^{-1}(7)$ are as follows:

{ \footnotesize
\begin{equation} \label{relation: n=2 l=5,6}
\begin{array}{ll}
L_{ij}^{3.5} = L_{ij}^{5.5} = L_{ij}^7                     &
\qquad \text{for } 1 \le i \ne j \le 4, ~\{i,j\} \ne \{1,3\}, \{2,4\}, \\
L_{i\fiber}^{3.5} = L_{i\fiber}^{5.5} = L_{i\fiber}^7 = 0  & \qquad \text{for } i=1,2,3,4,  \\
L_{24}^{5.5} = L_{24}^7 = Z_{24}^6,                        &                            \\
L_{13}^7 = L_{13}^7,                                       &                            \\
(L_{13}-L_{24})^{3.5} = L_{42}^{5.5} - Z_{31}^5            &                            \\
\end{array}
\end{equation}
}in $H_2 ( \mu^{-1}[3.5, 7] ).$

\begin{remark}
We can observe that if we exchange $i,j,s,\lambda$ of generators of
the homology group $H_n ( \mu^{-1}[0, 3.5] )$ for $n=1,2$ through
\begin{equation*}
i \mapsto 5-i, \qquad j \mapsto 5-j, \qquad s \mapsto 7-s, \qquad
\lambda \mapsto 7-\lambda,
\end{equation*}
for example
\begin{equation*}
L_2^{1.5} \mapsto L_3^{5.5}, \qquad L_{13}^0 \mapsto L_{42}^7,
\qquad Z_{13}^1 \mapsto Z_{42}^6,
\end{equation*}
then we obtain $H_n ( \mu^{-1}[3.5, 7] )$ for $n=1,2.$ This is also
applicable to their relations. The reason for this relation between
$H_n ( \mu^{-1}[0, 3.5] )$ and $H_n ( \mu^{-1}[3.5, 7] )$ can be
found in \cite[p. 157]{McD}.
\end{remark}

\section{Homology groups of $W$} \label{section: homology of W}

In this section, we calculate homology groups of $W.$ The manifold
$W$ can be considered as the union of $\mu^{-1}[0, 3.5]$ and
$\mu^{-1}[3.5, 7].$ Put
\begin{equation*}
Y = W, \qquad  A = \mu^{-1} [0, 3.5], \qquad A^\prime = \mu^{-1}
[3.5, 7].
\end{equation*}
Then, $A \cap A^\prime = \mu^{-1}(0) \cup \mu^{-1}(3.5).$
And,
\begin{align*}
H_2(A \cap A^\prime) ~ &= ~ H_2(\mu^{-1}(0)) \oplus
H_2(\mu^{-1}(3.5)) ~ \cong ~ \R^{10} \oplus \R^5,  \\
H_1(A \cap A^\prime) ~ &= ~ H_1(\mu^{-1}(0)) \oplus
H_1(\mu^{-1}(3.5)) ~ \cong ~ \R^5 \oplus \R^4,  \\
H_0(A \cap A^\prime) ~ &= ~ H_0(\mu^{-1}(0)) \oplus
H_0(\mu^{-1}(3.5)) ~ \cong ~ \R^1 \oplus \R^1.
\end{align*}
Henceforward, we fix orders of these summands. Applying
Mayer-Vietoris sequence to this for $n=0, 1,$ we obtain the
followings:

{ \footnotesize
\begin{equation} \label{sequence: MV n=1 l=1,2,5,6}
\begin{array}{ccccc}
H_1 ( A \cap A^\prime ) \cong \R^5 \oplus \R^4  &
~^{\underrightarrow{~ (i_*, j_*) ~}}  & H_1 (A) \oplus H_1 (A^\prime)
\cong \R^4 \oplus \R^4  & ~^{\underrightarrow{~ k_*-l_* ~}}  & H_1 (Y) \\
(0, L_i^{3.5})        & \longmapsto  & (L_i^{3.5}, L_i^{3.5})  & &  \\
(L_\fiber^0, 0)       & \longmapsto  & (0, 0)                  & &  \\
(L_j^0, 0), ~ j=1,2   & \longmapsto  & (L_j^0, L_{j+2}^7)      & &  \\
(L_j^0, 0), ~ j=3,4   & \longmapsto  & (L_j^0, L_{j-2}^7)      & &  \\
                      &              &                         & &  \\
~^{\underrightarrow{~ \partial_* ~}} \quad H_0 ( A \cap A^\prime ) \cong
\R^2  & ~^{\underrightarrow{~ (i_*, j_*) ~}}  & H_0 (A) \oplus H_0
(A^\prime) \cong \R^1 \oplus \R^1.  &  &     \\
y_0                   & \longmapsto  & (y_0, y_0)              & &  \\
y_1                   & \longmapsto  & (y_1, y_1).             & &  \\
\end{array}
\end{equation}
}Here, we use the fact that the level set $\mu^{-1}(0)$ is glued to
$\mu^{-1}(7)$ by the involution $\tau.$ The image and kernel of
$(i_*, j_*)$ for $n=1$ are as follows: {\footnotesize
\begin{align}
\label{equation: im of i, j for l=1,2,5,6} \im (i_*, j_*) ~ &= ~
\big\langle (L_i^{3.5}, L_i^{3.5}) ~|~ 1 \le i \le 4 \big\rangle +
\big\langle (L_1^0, L_3^7), (L_2^0, L_4^7) \big\rangle
~ \cong ~ \R^6,    \\
\label{equation: ker of i, j for l=1,2,5,6} \ker (i_*, j_*) ~ &= ~
\Big\langle (L_1^0+L_3^0, -L_1^{3.5}-L_3^{3.5}), ~ (L_2^0+L_4^0,
-L_2^{3.5}-L_4^{3.5}), ~ (L_\fiber^0, 0) \Big\rangle ~ \cong ~ \R^3.
\end{align}
}And, the image and kernel of $(i_*, j_*)$ for $n=0$ are as follows:
\begin{align*}
\im (i_*, j_*)   &=  \big\langle (y_0, y_0) = (y_1, y_1) \big\rangle
\cong \R^1,    \\
\ker (i_*, j_*)  &=  \big\langle (-y_0, y_1) \big\rangle \cong \R^1.
\end{align*}
We can show that $\partial_*(\gamma) = (-y_0, y_1)$ by definition of
$\partial_*.$ By these, we can conclude
\begin{equation} \label{homology: n=1 l=1,2,5,6}
H_1(W) = \big\langle L_1^0, L_2^0, \gamma \big\rangle \cong \R^3
\end{equation}
where $L_1^0=L_3^0, L_2^0=L_4^0.$ This is the proof of Proposition
\ref{proposition: H_1}.

Applying Mayer-Vietoris sequence to this for $n=2,$ we obtain the
followings: { \footnotesize
\begin{equation} \label{sequence: MV n=1 l=1,2,5,6}
\begin{array}{rclcl}
H_2 ( A \cap A^\prime )     & ^{\underrightarrow{(i_*, j_*)}} & H_2 (A) \oplus H_2 (A^\prime)   & ^{\underrightarrow{k_*-l_*}} & H_2 (Y) \qquad \qquad ^{\underrightarrow{~ \partial_* ~}} \\
\cong \R^{10} \oplus \R^5   &             & \cong \R^7 \oplus \R^7                              &                 &                                                                \\
(L_{i \fiber}^0, 0)         & \longmapsto & (0,0)                                               &                 &                                                                \\
(L_{12}^0, 0)               & \longmapsto & (L_{12}^0, L_{34}^7) = (L_{12}^{1.5}, L_{34}^{5.5}) & \dashrightarrow & L_{12}^{1.5} = L_{34}^{5.5}                                    \\
(L_{14}^0, 0)               & \longmapsto & (L_{14}^0, L_{32}^7) = (L_{14}^{1.5}, L_{32}^{5.5}) & \dashrightarrow & L_{14}^{1.5} = L_{32}^{5.5}                                    \\
(L_{23}^0, 0)               & \longmapsto & (L_{23}^0, L_{41}^7) = (L_{23}^{1.5}, L_{41}^{5.5}) & \dashrightarrow & L_{23}^{1.5} = L_{41}^{5.5}                                    \\
(L_{34}^0, 0)               & \longmapsto & (L_{34}^0, L_{12}^7) = (L_{34}^{1.5}, L_{12}^{5.5}) & \dashrightarrow & L_{34}^{1.5} = L_{12}^{5.5}                                    \\
(L_{13}^0, 0)               & \longmapsto & (L_{13}^0, L_{31}^7) = (L_{13}^{1.5}, L_{31}^7)     & \dashrightarrow & L_{13}^{1.5} = L_{31}^7                                        \\
(L_{24}^0, 0)               & \longmapsto & (L_{24}^0, L_{42}^7) = (L_{24}^0, L_{42}^{5.5})     & \dashrightarrow & L_{24}^0     = L_{42}^{5.5}                                    \\
(0, L_{jk}^{3.5})           & \longmapsto & (L_{jk}^{1.5}, L_{jk}^{5.5})                        & \dashrightarrow & L_{jk}^{1.5} = L_{jk}^{5.5}                                    \\
(0, (L_{13}-L_{24})^{3.5})  & \longmapsto & (L_{13}^{1.5}-Z_{24}^2, Z_{13}^5-L_{24}^{5.5})      & \dashrightarrow & Z_{13}^5 + Z_{24}^2 = L_{13}^{1.5} + L_{24}^{5.5}                \\
\end{array}
\end{equation}
}for $i=1,2,3,4$ and $1 \le j \ne k \le 4,$ $\{ j, k \} \ne \{ 1, 3
\}, \{ 2, 4 \}.$ In the sequence, the author uses the dashed line
$\dashrightarrow$ to mean that an element of $H_2 (A) \oplus H_2
(A^\prime)$ gives a relation in $H_2 (Y).$ For example,
$(L_{13}^{1.5}, L_{31}^7)$ gives the relation $L_{13}^{1.5} =
L_{31}^7.$ First, the rank of $(i_*, j_*)$ is 9. We explain for
this. It is easy that four $(L_{jk}^{1.5}, L_{jk}^{5.5})$'s,
$(L_{13}^{1.5}, L_{31}^7),$ $(L_{24}^0, L_{42}^{5.5}),$
$(L_{13}^{1.5}-Z_{24}^2, Z_{13}^5-L_{24}^{5.5})$ are independent in
$H_2 (A) \oplus H_2 (A^\prime)$ because we know basis of $H_2 (A),
H_2 (A^\prime).$ Let $M$ be the subspace of $H_2 (A) \oplus H_2
(A^\prime)$ generated by these, and let $M^\prime$ be its subspace
generated by $(L_{jk}^{1.5}, L_{jk}^{5.5})$'s. The remaining are
\begin{equation*}
(L_{12}^{1.5}, L_{34}^{5.5}), \quad (L_{14}^{1.5}, L_{32}^{5.5}),
\quad (L_{23}^{1.5}, L_{41}^{5.5}), \quad (L_{34}^{1.5},
L_{12}^{5.5}).
\end{equation*}
Since two sums
\begin{align*}
(L_{12}^{1.5}, L_{34}^{5.5}) + (L_{34}^{1.5}, L_{12}^{5.5})
&= (L_{12}^{1.5} + L_{34}^{1.5}, L_{12}^{5.5} + L_{34}^{5.5}),   \\
(L_{14}^{1.5}, L_{32}^{5.5}) + (L_{32}^{1.5}, L_{14}^{5.5}) &=
(L_{14}^{1.5} + L_{32}^{1.5}, L_{14}^{5.5} + L_{32}^{5.5})
\end{align*}
are contained in $\big\langle (L_{jk}^{1.5}, L_{jk}^{5.5})
\big\rangle \subset M,$ we only have to consider
\begin{equation*}
(L_{12}^{1.5}, L_{34}^{5.5}), \qquad (L_{14}^{1.5}, L_{32}^{5.5}).
\end{equation*}
Let $M^{\prime \prime}$ be the subspace of $H_2 (A) \oplus H_2
(A^\prime)$ generated by these two. Then, $M^{\prime \prime} \cap M
= M^{\prime \prime} \cap M^\prime$ because $L_{24}^0, L_{31}^7,
Z_{24}^2, Z_{13}^5$ does not appear in any component of elements of
$M^{\prime \prime}.$ It is easy that $M^{\prime \prime} \cap
M^\prime = \langle 0 \rangle.$ So, the rank of $(i_*, j_*)$ is 9.
This implies that the rank of $k_*-l_*$ is 5 by exactness of the
sequence. Since the image of $k_*-l_*$ is generated by
\begin{equation*}
\big\{~ L_{12}^0,~ L_{14}^0,~ L_{13}^0,~ L_{24}^0,~ Z_{24}^2 \text{
or } Z_{13}^5 ~\big\}
\end{equation*}
by relations (\ref{relation: n=2 l=1,2}), (\ref{relation: n=2
l=5,6}), this is a basis of $\im k_*-l_*$ because its dimension is
5. Recall that $\ker (i_*, j_*)$ for $n=1$ is equal to
\begin{equation*}
\big\langle (L_1^0 + L_3^0, -L_1^{3.5} - L_3^{3.5}), ~ (L_2^0 +
L_4^0, -L_2^{3.5}-L_4^{3.5}), ~ (L_\fiber^0, 0) \big\rangle ~ \cong
~ \R^3
\end{equation*}
by (\ref{equation: ker of i, j for l=1,2,5,6}). We have
\begin{equation*}
\begin{array}{ccc}
\partial_* (T_{1+3}^2)   & = &  (L_1^0 + L_3^0, -L_1^{3.5} - L_3^{3.5}),  \\
\partial_* (T_{2+4}^2)   & = &  (L_2^0 + L_4^0, -L_2^{3.5} - L_4^{3.5}),  \\
\partial_* (G_{61})      & = &   (L_\fiber^0, 0).       \\
\end{array}
\end{equation*}
That is,
\begin{equation*}
\im \partial_* = \langle ~ \partial_* (T_{1+3}^2), ~ \partial_*
(T_{2+4}^2), ~ \partial_* (G_{61}) ~ \rangle.
\end{equation*}
Since we already know that the rank of $\im \partial_*$ is equal to
3, we can conclude that {\footnotesize
\begin{equation} \label{homology: n=2 l=1,2,5,6}
H^2(W) = \langle ~ L_{12}^0, ~ L_{14}^0, ~ L_{13}^0, ~ L_{24}^0 ~
\rangle + \langle ~ Z_{24}^2 \text{ or } Z_{13}^5 ~ \rangle +
\langle ~ T_{1+3}^2, ~ T_{2+4}^2, ~ G_{61} ~ \rangle \cong \R^8
\end{equation}
}by exactness. This is the proof of Proposition \ref{proposition:
H_2}.

Last, we prove Proposition \ref{proposition: calabi-yau}, i.e. the
main theorem.
\begin{proof}[Proof of Theorem \ref{theorem: main}]
We only have to show that $c_1 (TW|_{T_{1+3}^2})=0$ and $c_1
(TW|_{T_{2+4}^2})=0$ because $c_1 ( T W|_Q)=0$ for other generators
of $H^2(W)$ is easy. The tangent spaces of $X$ restricted to
$L_{1+3} \times [0, 7]$ is trivial. Then, $TW|_{T_{1+3}^2}$ is
constructed by gluing of $(L_{1+3} \times [0, 7]) \times \C^3$
through the map
\begin{equation*}
\begin{array}{rcl}
(L_{1+3} \times 0) \times \C^3  & \longrightarrow  & (L_{1+3} \times
7) \times \C^3, \\
\big( (x, 0), (z_1, z_2, z_3) \big) & \longmapsto & \big( (x, 7),
(z_2, z_1, z_3) \big).
\end{array}
\end{equation*}
This gluing map is just a writing of $\tau$ by using complex
coordinates. The bundle $TW|_{T_{1+3}^2}$ has three subbundles
$\eta_i$ for $i=1,2,3$ whose pullbacks $\tilde{\eta}_i =
\tilde{\pi}^* \eta_i$ to $L_{1+3} \times [0, 7]$ are as follows:
\begin{align*}
\eta_1 &= \{(x, s, z, z, 0) \in (L_{1+3} \times [0, 7]) \times \C^3
~|~ x \in L_{1+3}, ~ s \in [0, 7], ~ z \in \C \}, \\
\eta_2 &= \{(x, s, 0, 0, z) \in (L_{1+3} \times [0, 7]) \times \C^3
~|~ x \in L_{1+3}, ~ s \in [0, 7], ~ z \in \C \}, \\
\eta_3 &= \{(x, s, z, -z, 0) \in (L_{1+3} \times [0, 7]) \times \C^3
~|~ x \in L_{1+3}, ~ s \in [0, 7], ~ z \in \C \}.
\end{align*}
Then, $\tilde{\eta}_1$ and $\tilde{\eta}_2$ are easily trivial. The
subbundle $\eta_3$ has a nonvanishing section
\begin{equation*}
(x, s) \mapsto \Big( x, s, \exp( \pi s \imath /7), -\exp( \pi s
\imath /7), 0 \Big).
\end{equation*}
And, this gives a nonvanishing section of $\tilde{\eta}_3.$ So,
$\tilde{\eta}_3$ is trivial, and we can conclude that $c_1
(TW|_{T_{1+3}^2})=0.$ Similarly, $c_1 (TW|_{T_{2+4}^2})=0.$
\end{proof}

\end{document}